\newtheorem{thm}{Theorem}[section]
\newtheorem{cor}[thm]{Corollary}
\newtheorem{lem}[thm]{Lemma}
\theoremstyle{definition}
\theoremstyle{remark}
\theoremstyle{conjecture}
\numberwithin{equation}{section}
\DeclareMathOperator{\im}{Im}
\DeclareMathOperator{\id}{id}
\DeclareMathOperator{\LT}{\textup{\textsc{lt}}}
\begin{document}
\title[Images of linear derivations and $\mathcal{E}$-derivations]{Images of
linear derivations and linear $\mathcal{E}$-derivations of $%
K[x_{1},x_{2},x_{3}]$}
\thanks{This work is supported by NSFC (No. 11771176)}
\subjclass[2010]{14R15}
\author{Haifeng Tian, Xiankun Du, Hongyu Jia}
\address{School of Mathematics, Jilin University, Changchun 130012, China}%
\email{tianhf1010@foxmail.com, duxk@jlu.edu.cn, jiahy1995@163.com}%

\keywords{$\mathcal{E}$-derivation, linear derivation, LFED conjecture, Mathieu-Zhao subspace}%

\begin{abstract}
Let $K$ be a field of characteristic zero. We prove that images of a linear $%
K$-derivation and a linear $K$-$\mathcal{E}$-derivation of the ring $%
K[x_{1},x_{2},x_{3}] $ of polynomial in three variables over $K$ are
Mathieu-Zhao subspaces, which affirms the LFED conjecture for linear $K$%
-derivations and linear $K$-$\mathcal{E}$-derivations of $%
K[x_{1},x_{2},x_{3}]$.

\end{abstract}
\maketitle

\section{Introduction}

%

Kernels of derivations of polynomial rings are studied extensively \cite%
{Fre,Now}. Images of polynomial derivations have also attracted researchers'
attention recently due to its relationship with the Jacobian conjecture \cite%
{EWZ}. A key problem is to prove that images of some derivations are
Mathieu-Zhao subspaces (MZ-subspaces for short). The notion of an MZ-subspace
was introduced by Zhao \cite{Zhao12} to study the Jacobian conjecture, which was also called a Mathieu subspace in early
literature.

Let $K$ denote a field of characteristic zero. The image of a $K$-derivation
of the univariate polynomial algebra $K[x]$ is an MZ-subspace of $K[x]$.
This is no longer true for the bivariate polynomial algebra (\cite[Example
2.4]{EWZ}). Van den Essen, Wright and Zhao \cite{EWZ} proved that the image
of every locally finite $K$-derivation of $K[x_{1},x_{2}]$ is an MZ-subspace
of $K[x_{1},x_{2}]$, and the two-dimensional Jacobian
conjecture  is restated that   $\im D$ is an
MZ-subspace of $K[x_{1},x_{2}]$ for every $K$-derivation $D$ with divergence
zero and $1 \in \im D$. They asked that if $\im D$
is an MZ-subspace for each $K$-derivation $D$ with divergence $0$ of $%
K[x_{1},x_{2}]$. The question was answered negatively by Sun \cite{Sun}. It
is natural to wonder images of which $K$-derivations of the polynomial
algebra $K[x_{1},x_{2},\dots,x_n]$ are MZ-subspaces. Zhao \cite{Zhao17}
formulated the LFED Conjecture, which asserts that the images of a locally
finite $K$-derivation and locally finite $K$-$\mathcal{E}$-derivation of a $%
K $-algebra $R$ are an MZ-subspace.

The LFED conjecture is still far from being solved, though it has been
verified for some special algebras \cite{EsZ}.  The LFED conjecture for
polynomial algebras is especially interesting, but it is proved only for
univariate polynomial rings \cite[Theorem 1.3]{Zhao x}. For the bivariate
case, van den Essen, Wright and Zhao \cite[Theorem 3.1]{EWZ} proved the
conjecture for derivations, and the case of $\mathcal{E}$-derivations is
open. When $n=3$, only derivations have been considered. Liu and Sun
confirmed the {LFED} conjecture of $K[x_{1},x_{2},x_{3}]$ for linear locally
nilpotent derivations \cite[Theorem 3.4]{LiuSun},  rank-two locally
nilpotent derivations \cite[Theorem 3.9]{SunLiu} and homogeneous locally
nilpotent derivations \cite[Theorem 5.3]{SunLiu}. For general polynomial
algebras, van den Essen and Sun \cite{EssSun} proved that images of
monomial-preserving derivations of are MZ-subspaces.

In this paper we will prove the LFED conjecture for kinds of special locally
finite derivations and $\mathcal{E}$-derivations, the so-called linear
derivations and linear $\mathcal{E}$-derivations, of $K[x_{1},x_{2},x_{3}]$.
These derivations and $\mathcal{E}$-derivations can be classified into three
types up to conjugation by automorphisms, respectively. Then we check these
types one by one. In Section 3, we prove the image of a linear derivation of
$K[x_{1},x_{2},x_{3}]$ is an MZ-subspace (Theorem \ref{thm D}). In Section
4, we prove that the image of a linear $\mathcal{E}$-derivation of $%
K[x_{1},x_{2},x_3]$ is an MZ-subspace (Theorem \ref{thm delta xyz}). As a
by-product, we get a similar result for the bivariate case (Corollary \ref%
{cor delta x y}).

\section{Preliminaries}

Throughout this paper, let $K$ denote a field of characteristic zero, $%
K^{*}=K\setminus\{0\}$, and $\mathcal{R}$ a commutative $K$-algebra.

A subspace $M$ of $\mathcal{R}$ is called an MZ-subspace of $\mathcal{R}$ if for all $a,b\in \mathcal{R}$,
$a^m\in M$ for all positive integers $m$ implies $ab^m\in M$ for all
sufficiently large integers $m$.

Ideals of $\mathcal{R}$ are MZ-subspaces.
If $M$ is an MZ-subspace of $\mathcal{R}$ and $1\in M$, then $M=\mathcal{R}$.

\begin{lem}
\cite[Proposition 2.7]{Zhao12}\label{lemidealmz} Let $M$ be a subspace of $\mathcal{R}$
containing an ideal $I$ of $\mathcal{R}$. Then $M$ is an MZ-subspace if and only if $%
M/I$ is an MZ-subspace of $\mathcal{R}/I$.
\end{lem}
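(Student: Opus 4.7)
The plan is to unwind the MZ-subspace definition on both sides and exploit the simple observation that, because $I\subseteq M$, for any $x\in\mathcal{R}$ one has $x\in M$ if and only if $x+I\in M/I$. This equivalence is the bridge between the two statements, and essentially the whole proof consists of transporting the quantifiers ``for all $m$'' and ``for all sufficiently large $m$'' across the quotient map $\pi:\mathcal{R}\to\mathcal{R}/I$.

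For the forward direction, I would assume $M$ is an MZ-subspace of $\mathcal{R}$ and take arbitrary $\bar{a},\bar{b}\in\mathcal{R}/I$ with $\bar{a}^m\in M/I$ for all $m\geq 1$. Choosing lifts $a,b\in\mathcal{R}$ with $\pi(a)=\bar{a}$ and $\pi(b)=\bar{b}$, the hypothesis $I\subseteq M$ gives $a^m\in M$ for every $m$, so the MZ property of $M$ supplies $N$ such that $ab^m\in M$ for all $m\geq N$; applying $\pi$ yields $\bar{a}\bar{b}^m\in M/I$ for $m\geq N$, as required.

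For the converse, assume $M/I$ is an MZ-subspace of $\mathcal{R}/I$, and let $a,b\in\mathcal{R}$ satisfy $a^m\in M$ for all $m\geq 1$. Passing to the quotient gives $\bar{a}^m\in M/I$ for all $m$, so the MZ property of $M/I$ yields $N$ such that $\bar{a}\bar{b}^m\in M/I$ for all $m\geq N$. Lifting back (again using $I\subseteq M$) yields $ab^m\in M$ for all $m\geq N$, completing the proof.

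There is no serious obstacle here: the argument is essentially a bookkeeping exercise in the correspondence theorem, with the single substantive input being the containment $I\subseteq M$, which makes the assertions $x\in M$ and $\bar{x}\in M/I$ equivalent and allows the MZ condition to pass freely between $M$ and $M/I$ in both directions.
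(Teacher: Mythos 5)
Your proof is correct. The paper does not actually prove this lemma --- it is quoted from Zhao's Proposition 2.7 without argument --- so there is nothing to compare against; your direct unwinding of the definition, hinging on the observation that $I\subseteq M$ makes $x\in M$ equivalent to $x+I\in M/I$ and hence lets the radical condition and the conclusion pass back and forth along $\pi$, is a complete and valid justification.
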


\begin{lem}
\cite[Lemma 2.5]{EWZ}\label{lemewz} Let $L$ be a field extension of $K$, and $M$
a $K$-subspace of $\mathcal{R}$. If $L\otimes_K M$ is an MZ-subspace of $L\otimes_K \mathcal{R}$%
, then $M$ is an MZ-subspace of $\mathcal{R}$.
\end{lem}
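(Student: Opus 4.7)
The plan is to use the faithful flatness of the field extension $L/K$ to transfer the MZ-subspace property from $L\otimes_K\mathcal{R}$ back down to $\mathcal{R}$. More precisely, I will identify $\mathcal{R}$ with its image under the natural injection $r\mapsto 1\otimes r$ from $\mathcal{R}$ into $L\otimes_K \mathcal{R}$ (injectivity holds because $L$ is $K$-free), and similarly view $M$ as a $K$-subspace of $L\otimes_K M\subseteq L\otimes_K\mathcal{R}$. The verification then reduces to two assertions: (i) the MZ-condition in $\mathcal{R}$ is preserved when elements are pushed up to $L\otimes_K\mathcal{R}$, and (ii) an element of $\mathcal{R}$ lies in $M$ as soon as its image lies in $L\otimes_K M$.

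To carry this out, I would fix $a,b\in\mathcal{R}$ with $a^m\in M$ for every positive integer $m$, and consider them as elements $1\otimes a,1\otimes b\in L\otimes_K\mathcal{R}$. Then $(1\otimes a)^m=1\otimes a^m\in L\otimes_K M$ for every $m$, so by the assumed MZ-property of $L\otimes_K M$ there is some $N$ with $(1\otimes a)(1\otimes b)^m=1\otimes(ab^m)\in L\otimes_K M$ for all $m\geq N$. It remains to conclude $ab^m\in M$ for these $m$, which is exactly assertion (ii) above.

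The core technical point is therefore the identity $(L\otimes_K M)\cap(1\otimes\mathcal{R})=1\otimes M$ inside $L\otimes_K\mathcal{R}$. I would prove this by choosing a $K$-basis $\{e_i\}_{i\in I}$ of $L$ with $1\in\{e_i\}$, say $e_{i_0}=1$. Every element of $L\otimes_K\mathcal{R}$ has a unique expression $\sum_{i\in I} e_i\otimes r_i$ with $r_i\in\mathcal{R}$ almost all zero, and an element lies in $L\otimes_K M$ exactly when all $r_i\in M$; on the other hand, an element of the form $1\otimes r$ has $r_{i_0}=r$ and $r_i=0$ for $i\ne i_0$. Thus $1\otimes r\in L\otimes_K M$ forces $r\in M$, which is what is needed.

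I expect no serious obstacle here: the only subtlety is choosing the basis of $L$ containing $1$ so that the uniqueness of the tensor expansion cleanly isolates the $\mathcal{R}$-component, and making sure the flatness step is correctly invoked to justify viewing $M$ as sitting inside $L\otimes_K M$. Once these bookkeeping points are in place, the MZ-property transfers directly and the lemma follows.
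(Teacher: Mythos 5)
Your proof is correct. The paper does not prove this lemma itself---it simply cites \cite[Lemma 2.5]{EWZ}---and your argument (push $a,b$ up to $1\otimes a,1\otimes b$, apply the MZ-property of $L\otimes_K M$, and descend via the identity $(L\otimes_K M)\cap(1\otimes\mathcal{R})=1\otimes M$, proved by expanding along a $K$-basis of $L$ containing $1$) is the standard one and matches the proof in the cited source.
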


\begin{lem}
\label{lem1eta} Let $\eta: \mathcal{R}\to \mathcal{R}$ be a $K$-linear mapping, and let $L$ be
an extension of $K$. If $\im(\id\otimes \eta)$ is an MZ-subspace of $%
L\otimes_K \mathcal{R}$ then $\im \eta$ is an MZ-subspace of $\mathcal{R}$.
\end{lem}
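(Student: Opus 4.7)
The plan is to reduce the statement directly to Lemma \ref{lemewz} by identifying $\im(\id\otimes\eta)$ with an extension of scalars of $\im\eta$.

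First I would verify the identification
\[
\im(\id\otimes\eta)=L\otimes_K\im\eta
\]
as $K$-subspaces (indeed as $L$-subspaces) of $L\otimes_K\mathcal{R}$. The inclusion $\supseteq$ is immediate, since every elementary tensor $l\otimes\eta(r)$ lies in the image of $\id\otimes\eta$. For the reverse inclusion, an arbitrary element of $L\otimes_K\mathcal{R}$ can be written as $\sum_i l_i\otimes r_i$, and $(\id\otimes\eta)\bigl(\sum_i l_i\otimes r_i\bigr)=\sum_i l_i\otimes\eta(r_i)\in L\otimes_K\im\eta$. Because $K$ is a field, the functor $L\otimes_K(-)$ is exact, so the natural map $L\otimes_K\im\eta\to L\otimes_K\mathcal{R}$ is injective, which makes the identification unambiguous.

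Once this identification is in place, the hypothesis says exactly that $L\otimes_K\im\eta$ is an MZ-subspace of $L\otimes_K\mathcal{R}$. I would then apply Lemma \ref{lemewz} to the $K$-subspace $M=\im\eta$ of $\mathcal{R}$ to conclude that $\im\eta$ itself is an MZ-subspace of $\mathcal{R}$.

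The argument is essentially bookkeeping; the only point requiring care is the description of $\im(\id\otimes\eta)$, which is the step I would expect a reader to want spelled out. No obstacle of substance arises.
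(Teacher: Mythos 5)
Your proposal is correct and follows exactly the paper's own argument: the paper also deduces the lemma from Lemma \ref{lemewz} together with the identity $\im(\id\otimes\eta)=L\otimes_K\im\eta$, which you simply spell out in more detail.
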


\begin{proof}
It follows from Lemma \ref{lemewz} and the fact that $\im(\id_K\otimes \eta)= L\otimes_K \im\eta$.
\end{proof}

A $K$-linear mapping $\eta:\mathcal{R}\to \mathcal{R}$ is said to be locally finite if for each $%
a\in \mathcal{R}$ the $K$-subspace spanned by $\eta^{i}(a)$, $i\geq 0$, is finite
dimensional over $K$.

Denote by $K[X]$ the algebra of polynomials in the variables $%
x_{1},x_{2},\ldots ,x_{n}$ over $K$. Let $\eta $ be a $K$-derivation or $K$%
-endomorphism of $K[X]$. If $L$ is a field extension of $K$, then $\id\otimes \eta $
is correspondingly a $L$-derivation or $L$-endomorphism of the $L$-algebra $%
L\otimes _{K}\mathcal{R}$. If each $\eta (x_{i})$ is a linear form of $%
x_{1},x_{2},\ldots ,x_{n}$, then $\eta $ is called linear. If $\eta $ is
linear, then $\id\otimes \eta $ is linear. Linear $K$-derivations and linear $%
K $-endomorphisms of $K[X]$ are locally finite \cite[Example 9.3.1]{Now}.

For $\alpha ,\beta \in K^{n},$ denote by $\alpha \beta $ the usual inner
product of $\alpha$ and $ \beta $. Let $e_{i}$ denote the element of $K^{n}$ with a $1$ in the $i$th
coordinate and $0$'s elsewhere.

Denote by $\mathbb{N}$ the nonnegative integers and $\mathbb{N}^{\ast }$ the
positive integers. For $\beta =(\beta _{1},\beta _{2},\dots ,\beta _{n})\in
\mathbb{N}^{n}$, let $|\beta |=\sum_{i=1}^{n}\beta _{i}$. Write $X^{\beta }$
for $x_{1}^{\beta _{1}}x_{2}^{\beta _{2}}\cdots x_{n}^{\beta _{n}},$ and $%
\alpha ^{\beta }$ for $a_{1}^{\beta _{1}}a_{2}^{\beta _{2}}\cdots
a_{n}^{\beta _{n}}$ for $\alpha =(a_{1},a_{2},\ldots ,a_{n})\in
K^{n}\backslash \{0\}$, where it is understood that $0^{0}=1.$

Fix  a monomial ordering $>$ on $K[X]$ and denote by $\LT(f)$ the leading term of   $f\in K[X]\setminus \{0\}$ (see \cite{Cox}).
\begin{lem}\label{lemeta}
   Let $\eta: K[X]\to K[X]$ be a $K$-linear map  that is graded according to the standard gradation of $K[X]$.  Fix  a monomial ordering $>$ on $K[X]$. Suppose
   $$\LT  (\eta(X^{\beta}))=a_{\beta}X^{\beta}, ~~a_{\beta}\in K^*, ~~~\text{for all}~\beta\in \mathbb{N}^n\setminus \{0\}. $$
   Then $ X^{\beta}\in \im \eta~~\text{for all}~\beta\in \mathbb{N}^n\setminus \{0\}. $
\end{lem}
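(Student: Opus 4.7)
The plan is to exploit the graded hypothesis to reduce the problem to a single homogeneous component, where it becomes a triangular matrix argument in finite dimension. Fix $\beta \in \mathbb{N}^n \setminus \{0\}$ and set $d = |\beta|$. Let $V_d$ denote the degree-$d$ homogeneous component of $K[X]$, which is a finite-dimensional $K$-space with basis $\mathcal{B}_d = \{X^\gamma : |\gamma| = d\}$. Since $\eta$ is graded, $\eta(V_d) \subseteq V_d$, so it suffices to show that the restriction $\eta|_{V_d}: V_d \to V_d$ is surjective.

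Next I would order the finite basis $\mathcal{B}_d$ according to the given monomial ordering $>$, say $X^{\gamma_1} > X^{\gamma_2} > \cdots > X^{\gamma_N}$. By hypothesis,
\[
\eta(X^{\gamma_i}) = a_{\gamma_i} X^{\gamma_i} + \sum_{X^{\gamma} < X^{\gamma_i},\ |\gamma|=d} c_{\gamma_i,\gamma} X^{\gamma}, \qquad a_{\gamma_i} \in K^{*}.
\]
Hence the matrix of $\eta|_{V_d}$ with respect to the ordered basis $(X^{\gamma_1},\ldots,X^{\gamma_N})$ is lower triangular with nonzero diagonal entries $a_{\gamma_1},\ldots,a_{\gamma_N}$. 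Such a matrix is invertible, so $\eta|_{V_d}$ is a $K$-linear automorphism of $V_d$.

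In particular $X^\beta \in V_d = \eta(V_d) \subseteq \im \eta$, which establishes the claim. No real obstacle is anticipated: the whole argument is the observation that the leading-term hypothesis exactly says that $\eta$ acts by a triangular matrix on each (finite-dimensional) homogeneous piece, and the standard gradation keeps these pieces $\eta$-invariant. The only point that deserves explicit mention in the write-up is that the monomial ordering restricted to the finite set $\mathcal{B}_d$ is just a linear order used to index the rows and columns of a finite matrix, so no well-ordering subtleties are needed.
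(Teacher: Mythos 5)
Your proof is correct. It rests on the same underlying observation as the paper's --- that the leading-term hypothesis makes $\eta$ ``triangular with nonzero diagonal'' with respect to the monomial ordering --- but you realize it differently. The paper runs a transfinite induction over the well-ordered set of all nonconstant monomials, using the gradation only to rule out a constant term in the remainder $f$ (constants are not covered by the induction hypothesis, so this exclusion is genuinely needed there). You instead use the gradation at the outset to cut $K[X]$ into the finite-dimensional $\eta$-invariant pieces $V_d$, order the monomial basis of each piece by the restriction of $>$, and conclude that the matrix of $\eta|_{V_d}$ is triangular with units on the diagonal, hence invertible. Your route buys a strictly finite argument (no transfinite induction, and in fact the stronger conclusion that $\eta$ is bijective on each $V_d$); the paper's route is the one that generalizes if one drops the gradedness hypothesis and instead assumes the lower-order terms stay nonconstant, since it never needs finite dimensionality. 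Both are complete proofs of the stated lemma.
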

\begin{proof}
 The proof is by transfinite induction according to $>$. Suppose $X^{\alpha}$ is the minimum nonconstant monomial. First we have $\eta (X^{\alpha})=a_{\alpha}X^{\alpha}+a$, where $a\in K[X]$. Since $\eta $ is
a graded linear map, we have $a=0$ by the minimality of $X^{\alpha}$, whence $X^{\alpha}= a^{-1}_{\alpha}\eta
(X^{\alpha})\in \im\eta $. Given $X^{\beta }\neq 1$, suppose that $X^{\gamma }\in
\im\eta $ if $X^{\beta }>X^{\gamma }\neq 1$. Then we need to prove $X^{\beta
}\in \im\delta $.  Write  $\eta (X^{\beta })= a_{\beta }X^{\beta }+f,$
where $f$ is a homogeneous polynomial and its each term is less than $X^{\beta }
$. By induction hypothesis we have $f\in \im\eta $. Since $a_{\beta }\neq 0$,
we have $X^{\beta }\in \im\eta $, as desired.
\end{proof}

\section{Image of linear $K$-derivations}

Denote by $K[X]$ the algebra of polynomials in the variables $%
x_1,x_2,\ldots,x_n$ over $K$.
Let $D$ be a linear $K$-derivation $K[X]$. Then $D X=XA$ for an $n\times n$
matrix $A$ over $K$, where $X=(x_1,x_2,\ldots,x_n)$ and $DX=(D(x_1),D(x_2),%
\dots,D(x_n))$. For any linear $K$-automorphism $\sigma$ of $K[X]$, $\sigma
D\sigma^{-1}$ is still a linear $K$-derivation of $K[X]$, and the image of $%
D $ is an MZ-subspace of $K[X]$ if and only if the image of $\sigma
\eta\sigma^{-1}$ is an MZ-subspace of $K[X]$. There exists a linear $K$%
-automorphism $\sigma$ of $K[X]$ such that $\sigma D\sigma^{-1}X=XJ$, where $%
J$ is a Jordan matrix similar to $A$ over the algebraic closure of $K$. Thus
we have the following lemma.

\begin{lem}
\label{lem D case123}Suppose $K$ is algebraically closed. Let $D$ be a
linear $K$-derivation of $K[x_{1},x_{2},x_{3}]$. Then $D$ is conjugate by a
linear $K$-automorphism of $K[x_{1},x_{2},x_{3}]$ to one of following
derivations:

\begin{enumerate}
\item $ax_{1}\partial_{x_{1}}+bx_{2}\partial_{x_{2}}+cx_{3}%
\partial_{x_{3}} $;
\item $ax_{1}\partial_{x_{1}}+(bx_{2}+x_{3})\partial_{x_{2}}+bx_{3}%
\partial_{x_{3}}$;
\item $(ax_{1}+x_{2})\partial_{x_{1}}+(ax_{2}+x_{3})%
\partial_{x_{2}}+ax_{3}\partial_{x_{3}}$;
\end{enumerate}
where $a,b,c\in K$.
\end{lem}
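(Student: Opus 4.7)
The plan is to reduce the classification to the Jordan normal form theorem on the space of linear forms $V=Kx_{1}\oplus Kx_{2}\oplus Kx_{3}$. A linear $K$-derivation $D$ preserves $V$, and the restriction $D|_{V}$ has matrix $A$ in the basis $(x_{1},x_{2},x_{3})$ via the relation $DX=XA$. A linear $K$-automorphism $\sigma$ of $K[X]$ is similarly determined by $\sigma|_{V}$, with some invertible matrix $P$. As the discussion immediately before the lemma records, $\sigma D\sigma^{-1}$ then has matrix $PAP^{-1}$ on $V$, so conjugation of $D$ by linear $K$-automorphisms corresponds exactly to similarity of $A$.

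Because $K$ is algebraically closed, I would next invoke the Jordan canonical form theorem to replace $A$ by a Jordan matrix $J$. A $3\times 3$ Jordan form falls into exactly three block patterns: three $1\times 1$ blocks, with eigenvalues $(a,b,c)$; one $2\times 2$ and one $1\times 1$ block, with eigenvalues $(a,b,b)$; and a single $3\times 3$ Jordan block, with eigenvalues $(a,a,a)$. These are the three cases of the lemma. The final step is to read the corresponding derivation off from $DX=XJ$ via $D(x_{j})=\sum_{i}J_{ij}x_{i}$: the diagonal pattern yields (1), the $1+2$ pattern yields (2), and the single-block pattern yields (3). A preliminary permutation of the basis vectors $x_{i}$ may be needed so that the off-diagonal $1$'s of $J$ occupy the prescribed positions; such permutations are themselves linear $K$-automorphisms of $K[X]$, so they are absorbed into $\sigma$ at no cost.

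I do not foresee a serious obstacle: the whole argument is Jordan normal form together with bookkeeping. The only care required is with the matrix convention, since under $DX=XA$ it is the columns of $A$, not its rows, that encode the values $D(x_{j})$, and the placement of the off-diagonal $1$'s in the statement is ``lower'' rather than ``upper''. Once these conventions are fixed, each of the three Jordan patterns translates at once into the corresponding derivation in the statement.
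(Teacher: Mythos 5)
Your proposal is correct and takes essentially the same route as the paper, which obtains the lemma directly from the Jordan canonical form via the correspondence $DX=XA$ between linear derivations and matrices, with conjugation by linear automorphisms corresponding to matrix similarity. Your additional care about the column convention and the placement of the off-diagonal $1$'s is sound bookkeeping that the paper leaves implicit.
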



%
%
%

\begin{lem}
\label{lem D abcde}\cite[Lemma 3.4]{EWZ} Let $D=\sum_{i=1}^{n}a_{i}x_{i}%
\partial_{x_{i}}$, where $a_{i}\in K~ (1\leq i\leq n)$. Then $\im D$ is an
MZ-subspace of $K[X]$.
\end{lem}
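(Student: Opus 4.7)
The plan is to exploit the fact that $D$ acts diagonally on the monomial basis: for $\beta \in \mathbb{N}^n$, $D(X^{\beta}) = (\alpha \beta) X^{\beta}$, where $\alpha = (a_1, \ldots, a_n)$ and $\alpha\beta$ is the inner product. Hence $\im D = \mathrm{span}_K \{X^{\beta} : \alpha\beta \neq 0\}$, and if we grade $K[X]$ by $(K, +)$ via $\deg X^{\beta} = \alpha\beta$, this becomes a $K$-graded algebra with $\ker D = K[X]_0$ a subalgebra and $\im D = \bigoplus_{s \in K \setminus \{0\}} K[X]_s$.

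To obtain a tractable grading, I would reduce to a $\mathbb{Z}^r$-grading. The additive subgroup of $K$ generated by $a_1, \ldots, a_n$ is torsion-free (since $\mathrm{char}\, K = 0$) and finitely generated, hence free abelian of some rank $r \leq n$; a choice of $\mathbb{Z}$-basis produces a $\mathbb{Z}$-linear map $\mu : \mathbb{Z}^n \to \mathbb{Z}^r$ with $\mu(\beta) = 0$ iff $\alpha\beta = 0$. So $\im D = \bigoplus_{\nu \in \mathbb{Z}^r \setminus \{0\}} K[X]_\nu$ for the induced $\mathbb{Z}^r$-grading $\deg X^{\beta} = \mu(\beta)$. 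I would then encode this grading via the $K$-algebra embedding $\psi : K[X] \hookrightarrow K[X][s_1^{\pm}, \ldots, s_r^{\pm}]$ sending $x_i$ to $s^{\mu(e_i)} x_i$, so that $\psi(X^{\beta}) = s^{\mu(\beta)} X^{\beta}$, the $\mathbb{Z}^r$-degree-zero component of $f$ equals $[s^0] \psi(f)$, and consequently $f \in \im D$ if and only if $[s^0] \psi(f) = 0$.

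The key step is the theorem of Duistermaat and van der Kallen on Laurent polynomials: for any field $F$ of characteristic zero, the kernel of the constant-term functional $[s^0] : F[s_1^{\pm}, \ldots, s_r^{\pm}] \to F$ is an MZ-subspace of $F[s^{\pm}]$. Applying this with $F = K(X)$ and intersecting with the subring $K[X][s^{\pm}]$ (the intersection of an MZ-subspace with a subring is itself an MZ-subspace of that subring, as is immediate from the definition) yields that $\ker [s^0] \cap K[X][s^{\pm}]$ is an MZ-subspace of $K[X][s^{\pm}]$. Now, if $f \in K[X]$ satisfies $f^m \in \im D$ for all $m \geq 1$, then $\psi(f)^m \in \ker [s^0]$ for all $m$, and the MZ property supplies $\psi(g) \psi(f)^m \in \ker [s^0]$ for every $g \in K[X]$ and all sufficiently large $m$; since $\psi(g) \psi(f)^m = \psi(g f^m)$ and $\psi$ is injective, this gives $g f^m \in \im D$ for $m$ large, proving the MZ property of $\im D$. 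The main obstacle is the appeal to Duistermaat-van der Kallen in the relative form over $K(X)$; one could alternatively attempt a direct combinatorial argument on the supports of the graded pieces $f_s$, but such arguments seem to reprove essentially the content of D-vdK.
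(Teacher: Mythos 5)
The paper offers no proof of this lemma; it simply cites \cite[Lemma 3.4]{EWZ}, whose proof is exactly the reduction you describe: grade $K[X]$ by the finitely generated (hence free) subgroup of $(K,+)$ generated by $a_{1},\dots ,a_{n}$, realize the degree-zero projection as a constant-term functional on Laurent polynomials, and invoke the Duistermaat--van der Kallen theorem, which does hold over any field of characteristic zero (for a fixed pair $f,g$ one passes to the finitely generated subfield of $K(X)$ spanned by their coefficients and embeds it in $\mathbb{C}$). Your argument is correct and is essentially that proof, so the one point you flag as an obstacle is only a routine Lefschetz-type reduction.
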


\begin{lem}
\label{lem D abcdee1} Let $D=\sum_{i=1}^{n-2}a_{i}x_{i}%
\partial_{x_{i}}+(a_{n-1}x_{n-1}+x_{n})\partial_{x_{n-1}}+a_{n-1}x_{n}%
\partial_{x_{n}}$, where $a_{i}\in K ~(1\leq i\leq n-1)$. Then $\im D$ is an
MZ-subspace of $K[X]$.
\end{lem}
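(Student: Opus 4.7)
The plan is to reduce to Lemma \ref{lem D abcde} via the quotient Lemma \ref{lemidealmz}, taking $I=(x_n)$. Observe that $D(x_n)=a_{n-1}x_n\in(x_n)$, so $D$ descends to a derivation $\bar D$ on $K[X]/(x_n)\cong K[x_1,\dots,x_{n-1}]$; a direct check gives $\bar D=\sum_{i=1}^{n-1}a_ix_i\partial_{x_i}$. If we can show $(x_n)\subseteq\im D$, then the quotient map $\pi\colon K[X]\to K[X]/(x_n)$ yields $\im D/(x_n)=\pi(\im D)=\im\bar D$; Lemma \ref{lem D abcde} will then guarantee that $\im\bar D$ is an MZ-subspace of $K[x_1,\dots,x_{n-1}]$, and Lemma \ref{lemidealmz} will promote this to $\im D$ being an MZ-subspace of $K[X]$.

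The heart of the matter is thus the inclusion $(x_n)\subseteq\im D$. The key identity is
\[
D(X^{\beta})=\lambda(\beta)X^{\beta}+\beta_{n-1}X^{\beta^{+}},
\]
where $\lambda(\beta)=\sum_{i=1}^{n-2}a_i\beta_i+a_{n-1}(\beta_{n-1}+\beta_n)$ and $\beta^{+}=(\beta_1,\dots,\beta_{n-2},\beta_{n-1}-1,\beta_n+1)$. This shows that for each fixed tuple $(c_1,\dots,c_{n-2},k)\in\mathbb{N}^{n-1}$ the subspace $W:=\mathrm{span}\{e_j=x_1^{c_1}\cdots x_{n-2}^{c_{n-2}}x_{n-1}^{j}x_n^{k-j}:0\le j\le k\}$ is $D$-invariant, with $D(e_j)=\lambda_0 e_j+je_{j-1}$ and a common scalar $\lambda_0=\sum_{i=1}^{n-2}a_ic_i+a_{n-1}k$ across the chain. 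If $\lambda_0\ne 0$, the matrix of $D|_W$ is triangular with nonzero diagonal, hence invertible, so $W\subseteq\im D$. If $\lambda_0=0$, then $\im(D|_W)=\mathrm{span}\{e_0,\dots,e_{k-1}\}$, which is exactly the part of $W$ of positive $x_n$-degree. Since $K[X]$ decomposes as the direct sum of such $W$'s, every monomial of positive $x_n$-degree lies in $\im D$, and hence $(x_n)\subseteq\im D$.

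The main obstacle is the Jordan-block case $\lambda_0=0$, where $D|_W$ is genuinely non-invertible. The key observation there is that $D|_W$ nevertheless surjects onto $W\cap (x_n)$, failing only on the top monomial $e_k=x_1^{c_1}\cdots x_{n-2}^{c_{n-2}}x_{n-1}^{k}$, which lies outside $(x_n)$ and projects under $\pi$ to a monomial in $\ker\bar D$. This is exactly why the ideal $(x_n)$, rather than some smaller ideal, embeds into $\im D$, and without this observation the reduction to Lemma \ref{lem D abcde} would not go through.
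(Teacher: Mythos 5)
Your proof is correct and follows essentially the same route as the paper: both establish $(x_n)\subseteq\im D$ from the identity $D(X^{\beta})=\alpha\beta X^{\beta}+\beta_{n-1}X^{\beta-e_{n-1}+e_{n}}$ (your $\lambda(\beta)$ is the paper's $\alpha\beta$) with the same case split on whether this scalar vanishes, and then pass to $K[X]/(x_{n})$ via Lemma \ref{lemidealmz} and Lemma \ref{lem D abcde}. The only difference is presentational: you package the inclusion $(x_{n})\subseteq\im D$ as a triangularity/invertibility statement for $D$ restricted to the finite-dimensional invariant subspaces $W$, whereas the paper runs the identical computation as an explicit chain of congruences modulo $\im D$.
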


\begin{proof}
We first prove that $\im D$ contains the ideal $(x_{n})$ generated by $x_n$. Let $$S=\{(\beta_{1},
\beta_{2},\dots,\beta_{n})\in\mathbb{N}^{n}\mid \beta_{n}>0\}.$$
It suffices to prove that $X^{\beta}\in \im D$ for all $\beta\in S$.
Let $\alpha=(a_{1},a_{2},\dots,a_{n-1},a_{n-1})$. Then we have
\begin{equation}  \label{eq D xyz}
D(X^{\beta})=\alpha\beta X^{\beta}+\beta_{n-1}X^{\beta-e_{n-1}+e_n}, \text{\quad
for all\quad}\beta\in\mathbb{N}^{n}.
\end{equation}
For $\beta= (\beta_{1},
\beta_{2},\dots,\beta_{n})\in S$, we need to consider the following cases.

If $\alpha\beta=0 $, then $\alpha(\beta+e_{n-1}-e_n)=0$. It follows  from \eqref{eq D xyz} that
\begin{equation*}
X^\beta=\frac{1}{\beta_{n-1}+1}D(X^{\beta+e_{n-1}-e_n}).
\end{equation*}
Since $\beta\in S$, we have $\beta+e_{n-1}-e_n\in \mathbb{N}^n$, whence $X^{\beta}\in\im D$.

If $\alpha\beta\neq0$, then
  from \eqref{eq D xyz} we have for any $0\leq i\leq \beta_{n-1}-1$,
\begin{equation*}
D(X^{\beta-ie_{n-1}+ie_{n }})=\alpha\beta X^{\beta-ie_{n-1}+ie_{n }}+(\beta_{n-1}-i)X^{\beta-(i+1)e_{n-1}+(i+1)e_{n }},
\end{equation*}
from which it follows that
\begin{equation*}  
X^{\beta-ie_{n-1}+ie_{n }}\equiv \frac{i-\beta_{n-1}}{\alpha\beta}
X^{\beta-(i+1)e_{n-1}+(i+1)e_{n }}~~\pmod{\im D},
\end{equation*}
{for}~$i=0,1,\dots,\beta_{n-1}-1$. Thus
\begin{equation*}
X^{\beta} \equiv (-1)^{\beta_{n-1}}\frac{\beta_{n-1}!}{(\alpha\beta)^{\beta_{n-1}}%
}X^{\beta-\beta_{n-1}e_{n-1}+\beta_{n-1}e_{n }} \pmod {\im D}.
\end{equation*}
By \eqref{eq D xyz}, we get $D(X^{\beta-\beta_{n-1}e_{n-1}+\beta_{n-1}e_{n }})=\alpha\beta
X^{\beta-\beta_{n-1}e_{n-1}+\beta_{n-1}e_{n }} $. Since $\alpha\beta\ne0$, we have $X^{\beta-\beta_{n-1}e_{n-1}+\beta_{n-1}e_{n }}\equiv 0~\pmod{\im D}$. Thus $X^{\beta}\in\im D$. %


We now prove that $\im D$ is an MZ-subspace of $K[X]$. To do this,  by Lemma
\ref{lemidealmz}  we only need to show $\im D/(x_{n})$ is an MZ-subspace of $%
K[X]/(x_{n})$. Note that $K[X]/(x_n)=K[y_1,y_2,\dots,y_{n-1}]$ is a
polynomial ring in the variables $y_1,y_2,\dots,y_{n-1}$. Let $\bar{D}$ be
the derivation induced by $D$ in $K[X]/(x_{n})$. Then $\bar{D}=\sum_{i=1}^{n-1}a_{i}y_{i}\partial_{y_{i}}$ and $\im \bar{D}=\im
D/(x_{n})$. By
Lemma \ref{lem D abcde}, $\im\bar{D}$ is an MZ-subspace of $K[y_1,y_2,\dots,y_{n-1}]$, and so $\im
D/(x_{n})$ is  an MZ-subspace of $K[X]/(x_{n})$,
as desired.
\end{proof}

\begin{lem}
\label{lem D aaaa111} Let $D=\sum_{i=1}^{n-1}(ax_{i}+x_{i+1})%
\partial_{x_{i}}+ax_{n}\partial_{x_{n}}$, where $a\in K^{*}$. Then $\im D$
is the ideal of $K[X]$ generated by $x_{1}, x_{2}, \dots, x_{n}$.
\end{lem}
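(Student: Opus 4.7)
\medskip

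\textbf{Proof plan.} The strategy is to establish the two inclusions separately, exploiting the grading of $D$ and applying Lemma \ref{lemeta}.

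First I would check the easy inclusion $\im D \subseteq (x_1,\dots,x_n)$: because each $D(x_i)$ is a linear form, $D$ is a graded $K$-linear map of degree $0$ on $K[X]$ (with respect to total degree), and it kills constants. Hence for every monomial $X^\beta$ with $|\beta|\geq 1$, the polynomial $D(X^\beta)$ is homogeneous of the same positive degree and therefore lies in $(x_1,\dots,x_n)$. This immediately gives $\im D \subseteq (x_1,\dots,x_n)$.

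For the reverse inclusion I would apply Lemma \ref{lemeta}. A direct computation using the Leibniz rule yields
\begin{equation*}
D(X^\beta) \;=\; a|\beta|\,X^\beta \;+\; \sum_{i=1}^{n-1}\beta_i\,X^{\beta-e_i+e_{i+1}},
\end{equation*}
and $D$ is graded with respect to the standard gradation of $K[X]$. Fix the lexicographic order with $x_1 > x_2 > \cdots > x_n$. Since each exchange $-e_i+e_{i+1}$ strictly decreases the monomial in this order, every term on the right except $a|\beta|X^\beta$ is strictly smaller than $X^\beta$. Hence the leading term of $D(X^\beta)$ is $a|\beta|X^\beta$, whose coefficient $a|\beta|$ is nonzero because $a\in K^*$ and $|\beta|\geq 1$. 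Lemma \ref{lemeta} then yields $X^\beta \in \im D$ for every $\beta \in \mathbb{N}^n\setminus\{0\}$, so the ideal $(x_1,\dots,x_n)$ is contained in $\im D$.

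The two inclusions combine to give $\im D = (x_1,\dots,x_n)$, as claimed. There is no real obstacle here: the key observation is just that the Jordan-block nilpotent piece $\sum x_{i+1}\partial_{x_i}$ produces only monomials that are lex-smaller than $X^\beta$, so the diagonal piece $aE$ with $a\neq 0$ furnishes the required nonzero leading coefficient, and Lemma \ref{lemeta} finishes the argument.
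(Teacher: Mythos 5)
Your proposal is correct and follows essentially the same route as the paper: the same Leibniz computation of $D(X^\beta)=a|\beta|X^\beta+\sum_{i=1}^{n-1}\beta_i X^{\beta-e_i+e_{i+1}}$, the same lexicographic order $x_1>\cdots>x_n$ to identify the leading term $a|\beta|X^\beta$, and the same appeal to Lemma \ref{lemeta}. The only difference is that you spell out the easy inclusion $\im D\subseteq(x_1,\dots,x_n)$, which the paper dismisses as obvious.
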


\begin{proof}

Denote by $(X)$ the ideal generated by $x_{1}, x_{2}, \dots, x_{n}$. It is
obvious that $\im D\subseteq(X)$. To prove the  opposite inclusion it suffices to prove $X^{\beta }\in \im D$ for all $\beta\in\mathbb{N}^n\setminus\{0\}$. It is clear that $D$ is graded linear map. Take the lexicographical order on $K[X]$ with $x_{1}>x_{2}>\cdots>x_{n}$. Then for any $\beta\in\mathbb{N}^n\setminus\{0\}$,
\begin{align*}
\LT(D(X^{\beta })) &= \LT( a|\beta|X^{\beta}+\sum_{1\leq i\leq n-1, \beta_{i}\neq0}\beta_{i}X^{\beta-e_{i}+e_{i+1}})= a{|\beta |}X^{\beta },
\end{align*} and $a|\beta |\ne 0$ since $a\neq 0$. By Lemma \ref{lemeta},
$X^{\beta }\in \im D$ for all $\beta\in\mathbb{N}^n\setminus\{0\}$, as desired.
\end{proof}

\begin{lem}
\label{lem D LN}\cite[Theorem 3.4]{LiuSun} Let $D$ be a linear locally
nilpotent $K$-derivation of $K[x_{1},x_{2},x_{3}]$. Then $\im D$ is an
MZ-subspace of $K[x_{1},x_{2},x_{3}]$.
\end{lem}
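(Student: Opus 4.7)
The plan is to reduce to the algebraically closed case, apply the classification of linear $K$-derivations of $K[x_{1},x_{2},x_{3}]$ from Lemma \ref{lem D case123}, and then use local nilpotency to eliminate the diagonal scalars, leaving three explicit normal forms to check.

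By Lemma \ref{lem1eta} and the fact that local nilpotency of a linear derivation is equivalent to nilpotency of its coefficient matrix (which is preserved under extension of scalars), we may assume $K$ is algebraically closed. By Lemma \ref{lem D case123}, since conjugation by a linear $K$-automorphism preserves both local nilpotency and the MZ-property of the image, we may assume $D$ is in one of the three normal forms listed there. Local nilpotency forces the coefficient matrix to have only $0$ as eigenvalue, so the diagonal scalars $a,b,c$ must all vanish. The three cases then collapse to: (i) $D=0$, for which $\im D=\{0\}$ is trivially an MZ-subspace; (ii) $D=x_{3}\partial_{x_{2}}$, which is the $n=3$, $a_{1}=a_{2}=0$ specialization of Lemma \ref{lem D abcdee1}; and (iii) $D=x_{2}\partial_{x_{1}}+x_{3}\partial_{x_{2}}$, the full nilpotent Jordan block.

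The main obstacle is case (iii). Here a direct degree-by-degree computation shows that $\im D$ contains no principal ideal of the form $(x_{3}^{k})$ (for instance, $x_{1}^{k}x_{3}^{k}\notin\im D$ for every $k\geq 1$), so one cannot simply reduce modulo an ideal and invoke Lemma \ref{lemidealmz}. My plan is to exploit the $\mathfrak{sl}_{2}$-structure attached to $D$: setting $E=x_{1}\partial_{x_{2}}+x_{2}\partial_{x_{3}}$, the operators $D$, $E$, and $H=[E,D]$ generate an $\mathfrak{sl}_{2}$-action on $K[x_{1},x_{2},x_{3}]$ with $D$ as the lowering operator; by Weitzenbock's theorem $\ker E=K[x_{1},\,x_{2}^{2}-2x_{1}x_{3}]$ is a polynomial ring in two variables, and since the highest-weight vector of each irreducible $\mathfrak{sl}_{2}$-summand lies outside $\im D$, one obtains a graded vector-space decomposition $K[x_{1},x_{2},x_{3}]=\im D\oplus\ker E$.

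The MZ-condition for $\im D$ then translates into a statement about the projection $\pi\colon K[x_{1},x_{2},x_{3}]\twoheadrightarrow\ker E$ along $\im D$: if $\pi(a^{m})=0$ for all $m\geq 1$, one must show $\pi(ab^{m})=0$ for all sufficiently large $m$. I would carry this out by a leading-term analysis, using that $\ker E$ is a polynomial ring (hence a domain) to pin down the top-$\mathfrak{sl}_{2}$-weight component of $a$ from the hypothesis, and then concluding for $ab^{m}$ by a degree argument on the top-weight piece; the multiplicative behaviour of leading components, together with the transversality of $\ker E$ to $\im D$ in each graded piece, is what I expect to be the technically delicate step.
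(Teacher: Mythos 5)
The paper does not prove this lemma at all: it is imported verbatim as \cite[Theorem 3.4]{LiuSun}, and the only place its content is actually needed is the case $a=0$ of the third normal form in Theorem \ref{thm D}. Your reduction is sound up to that same point: passing to an algebraic closure via Lemma \ref{lem1eta}, invoking Lemma \ref{lem D case123}, and observing that local nilpotency forces $a=b=c=0$ correctly leaves the three derivations $0$, $x_{3}\partial_{x_{2}}$, and $x_{2}\partial_{x_{1}}+x_{3}\partial_{x_{2}}$, and your disposal of the first two (trivially, and via Lemma \ref{lem D abcdee1} with $a_{1}=a_{2}=0$) is correct. Your $\mathfrak{sl}_{2}$ setup for the third case is also essentially right: with $E=x_{1}\partial_{x_{2}}+x_{2}\partial_{x_{3}}$ one does get $\ker E=K[x_{1},x_{2}^{2}-2x_{1}x_{3}]$ and a graded splitting $K[x_{1},x_{2},x_{3}]=\im D\oplus\ker E$, since on each irreducible summand the lowering operator misses exactly the highest-weight line.

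The genuine gap is that the last paragraph is not a proof but a declaration of intent, and it stops exactly where all of the difficulty of \cite[Theorem 3.4]{LiuSun} lives. The projection $\pi$ onto $\ker E$ along $\im D$ is not multiplicative, so knowing $\pi(a^{m})=0$ for all $m$ gives no direct handle on $\pi(ab^{m})$; a naive leading-term argument on the top-weight component fails because the top-weight component of a power need not be the power of the top-weight component once cancellations in the $\mathfrak{sl}_{2}$-isotypic decomposition are taken into account. The known proof does not run through $\ker E$ at all: it uses the degree function $\deg_{\omega}$ with $\omega=(-1,0,1)$ (after identifying $D$ with $x_{1}\partial_{x_{2}}-x_{2}\partial_{x_{3}}$), proving on one hand that $\deg_{\omega}X^{\beta}<0$ forces $X^{\beta}\in\im D$ (Lemma \ref{lem m>k D}) and on the other that $f^{i}\in\im D$ for all $i$ forces $\deg_{\omega}f<0$ (Lemma \ref{lem f^i imD}); this second implication is the hard step and in \cite{LiuSun} it rests on a factorial-conjecture-type identity for the constant terms of powers, not on a leading-term computation. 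Until you supply an argument of comparable strength for your projection $\pi$, case (iii) --- and hence the lemma --- remains unproved; as written you should either fill in that analysis or, as the paper does, cite \cite[Theorem 3.4]{LiuSun}.
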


\begin{thm}
\label{thm D} Let $D$ be a linear $K$-derivation of $K[x_{1},x_{2},x_{3}]$.
Then $\im D$ is an MZ-subspace of $K[x_{1},x_{2},x_{3}]$.
\end{thm}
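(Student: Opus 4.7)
The proof will be a routine assembly of the preceding lemmas, so the plan is simply to organize the reductions in the right order. First, I would reduce to the case that $K$ is algebraically closed. Since $D$ extends to a linear $\bar K$-derivation $\id\otimes D$ on $\bar K[x_1,x_2,x_3]$ over the algebraic closure $\bar K$, and since $\im(\id\otimes D)=\bar K\otimes_K \im D$, Lemma \ref{lem1eta} tells us that if $\im(\id\otimes D)$ is an MZ-subspace of $\bar K[x_1,x_2,x_3]$, then so is $\im D$ in $K[x_1,x_2,x_3]$. So I may assume $K=\bar K$.

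Next I would use the fact recorded just before Lemma \ref{lem D case123}: the property of being an MZ-subspace is preserved under conjugation by a linear $K$-automorphism of $K[X]$, since $\sigma$ is a $K$-algebra isomorphism and $\im(\sigma D\sigma^{-1})=\sigma(\im D)$. Thus Lemma \ref{lem D case123} reduces the theorem to the three explicit normal forms listed there.

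I would then verify the three cases directly by invoking the lemmas already proved:
\begin{enumerate}
\item For $D=ax_1\partial_{x_1}+bx_2\partial_{x_2}+cx_3\partial_{x_3}$, apply Lemma \ref{lem D abcde} with $n=3$.
\item For $D=ax_1\partial_{x_1}+(bx_2+x_3)\partial_{x_2}+bx_3\partial_{x_3}$, apply Lemma \ref{lem D abcdee1} with $n=3$ (so $a_1=a$, $a_2=b$).
\item For $D=(ax_1+x_2)\partial_{x_1}+(ax_2+x_3)\partial_{x_2}+ax_3\partial_{x_3}$, split on $a$: if $a\in K^{*}$, Lemma \ref{lem D aaaa111} gives $\im D=(x_1,x_2,x_3)$, which is an ideal and hence an MZ-subspace; if $a=0$, then $D=x_2\partial_{x_1}+x_3\partial_{x_2}$ is linear and locally nilpotent, so Lemma \ref{lem D LN} applies.
\end{enumerate}

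There is no real obstacle: the classification in Lemma \ref{lem D case123} has been arranged precisely so that every Jordan-type normal form is covered by one of Lemmas \ref{lem D abcde}, \ref{lem D abcdee1}, \ref{lem D aaaa111}, \ref{lem D LN}. The only subtlety worth writing out carefully is the case $a=0$ in the third normal form, which is not covered by Lemma \ref{lem D aaaa111} (whose hypothesis requires $a\in K^{*}$) and must be handled separately through the locally nilpotent case. After that observation, the proof is complete.
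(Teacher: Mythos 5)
Your proposal is correct and follows exactly the same route as the paper's own proof: reduce to an algebraically closed field via Lemma \ref{lem1eta}, invoke the conjugation-invariance and the normal forms of Lemma \ref{lem D case123}, and then dispatch the three cases with Lemmas \ref{lem D abcde}, \ref{lem D abcdee1}, \ref{lem D aaaa111} (for $a\neq 0$) and \ref{lem D LN} (for $a=0$). The one point you single out as a subtlety --- that the third normal form with $a=0$ falls outside Lemma \ref{lem D aaaa111} and must go through the locally nilpotent case --- is precisely the case split the paper makes.
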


\begin{proof}
By Lemma \ref{lem1eta}, without loss of generality, we assume that $K$ is
an algebraically closed field. Then we only need to consider the three
derivations in Lemma \ref{lem D case123}. Lemma \ref{lem D abcde} and Lemma %
\ref{lem D abcdee1} prove the first two cases, respectively. The last case
follows from Lemma \ref{lem D aaaa111} if $a\ne 0$ and Lemma \ref{lem D LN}
if $a=0$.
\end{proof}

\section{image of linear $K$-$\mathcal{E}$-derivation of $%
K[x_{1},x_{2},x_{3}]$}

Denote by $K[X]$ the algebra of polynomials in the variables $%
x_1,x_2,\ldots,x_n$ over $K$.
Let $\phi$ be a $K$-endomorphism of the $K$-algebra $K[X]$. Then $\id- \phi$ is
called a $K$-$\mathcal{E}$-derivation of $K[X]$. Furthermore, $\id- \phi$ is
called linear if $\phi$ is linear.

Let $\phi$ be a linear $K$-endomorphism of $K[X]$. Then $\phi X =XA$ for an $%
n\times n$ matrix $A$ over $K$, where $X=(x_1,x_2,\ldots,x_n)$ and $\phi X
=(\phi (x_1),\phi (x_2),\dots,\phi (x_n))$. Denote by $\delta$   the $K$-$\mathcal{E}$-derivation $\id- \phi$. For any linear $K$-automorphism $%
\sigma$ of $K[X]$, $\sigma \delta\sigma^{-1}=\id-  \sigma \phi\sigma^{-1}$   is still a linear $K$-$\mathcal{E}$-derivation of $K[X]$, and the image of $\delta $ is an MZ-subspace of $K[X]$
if and only if the image of $\sigma \delta \sigma^{-1}$ is an MZ-subspace of $%
K[X]$. There exists a linear $K$-automorphism $\sigma$ of $K[X]$ such that $%
\sigma \phi \sigma^{-1} X =XJ$, where $J$ is a Jordan matrix similar to $A$
over the algebraic closure of $K$. Thus we have the following lemma.

\begin{lem}
\label{lem delta case123} Let $\phi$ be a linear $K$-endomorphism of $%
K[x_{1},x_{2},x_{3}]$. Then $\phi$ is conjugate by a linear $K$-automorphism
of $K[x_{1},x_{2},x_{3}]$ to a $K$-endomorphism of $K[x_{1},x_{2},x_{3}]$
defined by one of the following conditions:

\begin{enumerate}
\item $\phi(x_{1})=ax_{1}, ~\phi(x_{2})=bx_{2}, ~\phi(x_{3})=cx_{3}$;
\item $\phi(x_{1})=ax_{1},~ \phi(x_{2})=bx_{2}+x_{3}, ~\phi(x_{3})=bx_{3}$;
\item $\phi(x_{1})=ax_{1}+x_{2}, ~\phi(x_{2})=ax_{2}+x_{3}, ~
\phi(x_{3})=ax_{3}$;
\end{enumerate}
where $a,b,c\in K$.
\end{lem}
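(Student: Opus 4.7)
The plan is to reduce the classification to the Jordan canonical form theorem for the $3\times 3$ matrix representing $\phi$ on the linear forms, mirroring exactly the argument sketched (for derivations) in Lemma \ref{lem D case123}. Following the paragraph immediately preceding the lemma, I write $\phi X = XA$ for a matrix $A\in M_3(K)$ (passing to the algebraic closure of $K$ if necessary, so that all eigenvalues of $A$ lie in the ground field). If $\sigma$ is the linear $K$-automorphism of $K[X]$ determined by $\sigma X = XP$ for an invertible matrix $P$, then a direct computation gives $(\sigma\phi\sigma^{-1})X = X(P^{-1}AP)$. Thus conjugation of $\phi$ by linear $K$-automorphisms corresponds exactly to conjugation of $A$ by invertible matrices, and the classification problem reduces to classifying $3\times 3$ matrices up to similarity.

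Next I would apply the Jordan canonical form theorem. Up to reordering the blocks, a $3\times 3$ Jordan matrix has exactly one of three block shapes: (i) three $1\times 1$ blocks, producing $\mathrm{diag}(a,b,c)$; (ii) a $2\times 2$ Jordan block with eigenvalue $b$ together with a $1\times 1$ block with eigenvalue $a$; (iii) a single $3\times 3$ Jordan block with eigenvalue $a$. Because of the convention $\phi X = XJ$ (with $X$ a row of variables), I would place the off-diagonal $1$'s of each Jordan block on the subdiagonal rather than the superdiagonal; this is just a transposition of the standard form and is forced by the orientation $\phi X = XJ$.

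The final step is to read off $\phi(x_1), \phi(x_2), \phi(x_3)$ directly from $\phi X = XJ$ in each of the three cases, which reproduces the presentations (1), (2), (3) of the lemma verbatim. There is no serious obstacle: the argument is essentially the standard Jordan form theorem together with the matrix formalism already set up in the paragraph preceding the lemma, and it runs in complete parallel to the classification of linear derivations in Lemma \ref{lem D case123}. The only thing that requires a moment of care is the placement of the off-diagonal $1$'s of the Jordan blocks so that the formulas come out with the exact positions stated, which is purely a convention choice.
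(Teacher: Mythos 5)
Your proposal is correct and is essentially identical to the paper's own (implicit) argument: the paragraph preceding the lemma reduces the statement to putting the matrix $A$ with $\phi X = XA$ into Jordan form over the algebraic closure, and the three cases are just the three possible block structures of a $3\times 3$ Jordan matrix, with the $1$'s on the subdiagonal because $X$ is a row vector. (Whether the conjugated matrix comes out as $PAP^{-1}$ or $P^{-1}AP$ is immaterial, since both sweep out the full similarity class.)
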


Combining \cite[Lemma 3.2]{EssSun} and \cite[Corollary 3.3]{EssSun} we have
the following lemma.

\begin{lem}
\label{lem delta abcde} Let $\delta= \id -\phi$ be a $K$-$\mathcal{E}$%
-derivation of $K[X]$ and $\phi(x_{i})=a_{i}x_{i}$ with $a_{i}\in K$ for all
$1\leq i\leq n$. Then $\im \delta$ is an MZ-subspace of $K[X]$.
\end{lem}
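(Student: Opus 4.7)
The plan is to exploit the fact that $\phi$, being diagonal in the standard basis, acts on each monomial by a scalar. A direct computation gives $\phi(X^{\beta}) = \alpha^{\beta} X^{\beta}$ for $\alpha = (a_1,\ldots,a_n)$ (using the convention $0^{0} = 1$ set up in Section 2), so $\delta(X^{\beta}) = (1 - \alpha^{\beta}) X^{\beta}$. Consequently $\im \delta$ is the monomial subspace $\bigoplus_{\beta \notin S} K X^{\beta}$, where $S = \{\beta \in \mathbb{N}^{n} : \alpha^{\beta} = 1\}$, while $\ker \delta = \bigoplus_{\beta \in S} K X^{\beta}$.

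The key structural point is that $S$ is a submonoid of $\mathbb{N}^{n}$: it contains $0$ and is closed under addition because $\alpha^{\beta + \gamma} = \alpha^{\beta}\alpha^{\gamma}$. Equivalently, $\im \delta$ is spanned by monomials and its monomial index set is the complement of a submonoid of $\mathbb{N}^{n}$. This places us precisely in the framework of monomial-preserving $K$-endomorphisms treated by van den Essen and Sun, so once the diagonal formula above is in hand, invoking \cite[Lemma 3.2]{EssSun} together with \cite[Corollary 3.3]{EssSun} immediately yields that $\im \delta$ is an MZ-subspace of $K[X]$.

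The main obstacle, absent these citations, would be the MZ-property itself: one would need to show that if $f^{m} \in \im \delta$ for every $m \geq 1$ (equivalently, every $X^{\gamma}$-coefficient of $f^{m}$ with $\gamma \in S$ vanishes), then the analogous vanishing holds for the $X^{\gamma}$-coefficients of $fg^{m}$ for every $g$ once $m$ is large. Carrying out that combinatorial step by hand would require exploiting the submonoid structure of $S$ through multinomial expansions of $f^{m}$, and this is exactly what \cite[Lemma 3.2]{EssSun} and \cite[Corollary 3.3]{EssSun} handle in broader generality. The efficient route is therefore to reduce to their framework via the diagonal computation $\delta(X^{\beta}) = (1 - \alpha^{\beta}) X^{\beta}$ rather than to reprove the combinatorial core from scratch.
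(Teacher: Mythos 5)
Your proof is correct and takes essentially the same route as the paper: the paper also obtains this lemma directly by combining \cite[Lemma 3.2]{EssSun} and \cite[Corollary 3.3]{EssSun}, giving no further argument. Your added observations --- that $\delta(X^{\beta})=(1-\alpha^{\beta})X^{\beta}$ and that the kernel exponents form a submonoid of $\mathbb{N}^{n}$ --- are accurate and simply make explicit why the cited results apply.
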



\begin{lem}
\label{lem delta abcdee1} Let $\delta= \id -\phi$ be a $K$-$\mathcal{E}
$-derivation of $K[X]$, $n\geq2$, such that
\begin{align*}
&\phi(x_{i})=a_{i}x_{i} \text{\quad for all\quad} 1\leq i\leq n-2, \\
&\phi(x_{n-1})=a_{n-1}x_{n-1}+x_{n}, \\
&\phi(x_{n})=a_{n-1}x_{n}
\end{align*}
with $a_{i}\in K$ for all $1\leq i\leq n-1$. Then

\begin{enumerate}
\item $\im\delta $ contains the ideal $(x_{n})$ generated by $x_{n}$;
\item $\im\delta $ is an MZ-subspace of $K[X]$.
\end{enumerate}
\end{lem}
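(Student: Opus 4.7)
The plan is to parallel the structure of Lemma \ref{lem D abcdee1}. First I would establish part (1) by showing every monomial $X^\beta$ with $\beta_n > 0$ lies in $\im\delta$; then I would derive part (2) from (1) via Lemma \ref{lemidealmz}, using that the induced $K$-endomorphism on $K[X]/(x_n) \cong K[y_1,\ldots,y_{n-1}]$ is diagonal (namely $\bar\phi(y_i) = a_i y_i$ for $1 \leq i \leq n-1$), so Lemma \ref{lem delta abcde} applies.

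For part (1), the key observation is that for each $\gamma \in \mathbb{N}^{n-2}$ and each $s \in \mathbb{N}$, the finite-dimensional subspace $W_\gamma^s \subseteq K[X]$ spanned by the monomials $v_j := x_1^{\gamma_1}\cdots x_{n-2}^{\gamma_{n-2}} x_{n-1}^{s-j} x_n^j$ for $j = 0,1,\ldots,s$ is $\phi$-stable. A direct binomial expansion of $(a_{n-1}x_{n-1}+x_n)^{s-j}(a_{n-1}x_n)^j$ shows that $\phi|_{W_\gamma^s}$ is triangular in the ordering $v_0, v_1,\ldots,v_s$, with constant diagonal $\lambda := a_1^{\gamma_1}\cdots a_{n-2}^{\gamma_{n-2}} a_{n-1}^s$, and that $\phi-\lambda\id$ has nonzero superdiagonal entry $c(s-j)\,a_{n-1}^{s-1}$ at position $j$ whenever $a_{n-1}\neq 0$ and $c := a_1^{\gamma_1}\cdots a_{n-2}^{\gamma_{n-2}} \neq 0$.

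I would then split into three cases. If $a_{n-1}=0$, then $\phi(X^\beta)=0$ for every $\beta$ with $\beta_n>0$, so $\delta(X^\beta)=X^\beta\in\im\delta$ trivially. If $a_{n-1}\neq 0$ and $\lambda\neq 1$, then $\delta|_{W_\gamma^s} = (1-\lambda)\id - (\phi-\lambda\id)$ is a nonzero scalar plus a nilpotent operator and hence invertible, so $W_\gamma^s \subseteq \im\delta$; in particular every $v_j$ with $j\geq 1$ lies in $\im\delta$. If $\lambda = 1$ (which forces $c\neq 0$), then $\delta|_{W_\gamma^s} = -(\phi-\id)$ is strictly triangular; the formula $\delta(v_j) = -(s-j)\,a_{n-1}^{-1}\,v_{j+1} + (\text{terms in } v_{j+2},\ldots,v_s)$ for $j<s$ permits a reverse induction on $j = s-1, s-2,\ldots, 0$ to conclude that $v_s, v_{s-1},\ldots, v_1$ all lie in $\im\delta$. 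Since every $X^\beta$ with $\beta_n > 0$ appears as some $v_j$ with $j\geq 1$, this yields $(x_n)\subseteq\im\delta$, proving (1).

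The main technical obstacle is the $\lambda = 1$ subcase of part (1): here $\delta|_{W_\gamma^s}$ is not invertible, and one must exploit the fine triangular structure to recover precisely the codimension-one subspace of $W_\gamma^s$ spanned by the monomials divisible by $x_n$. Once part (1) is established, part (2) is immediate: the induced $\bar\delta$ on $K[X]/(x_n)$ satisfies the hypothesis of Lemma \ref{lem delta abcde}, so $\im\bar\delta = \im\delta/(x_n)$ is an MZ-subspace of $K[y_1,\ldots,y_{n-1}]$, and Lemma \ref{lemidealmz} lifts this to the statement that $\im\delta$ is an MZ-subspace of $K[X]$.
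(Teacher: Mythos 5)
Your proposal is correct and proves the lemma by essentially the same mechanism as the paper: part (2) is handled identically via Lemma \ref{lemidealmz} and Lemma \ref{lem delta abcde}, and part (1) rests on the same binomial expansion of $\phi(X^{\beta})$ and the same dichotomy according to whether the scalar $\lambda=a_1^{\gamma_1}\cdots a_{n-2}^{\gamma_{n-2}}a_{n-1}^{s}$ (the paper's $\alpha^{\beta}$) equals $1$. The only difference is organizational: the paper runs a single induction over all monomials with $\beta_n>0$ in the graded lexicographic order, whereas you package the monomials into the finite-dimensional $\phi$-stable blocks $W_{\gamma}^{s}$ and argue by invertibility (when $\lambda\neq 1$) or strict triangularity plus reverse induction (when $\lambda=1$) within each block --- a correct and arguably cleaner bookkeeping of the same computation.
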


\begin{proof}
(1) Let $\alpha=(a_{1},a_{2},\dots ,a_{n-1},a_{n-1})$. For $\beta =(\beta
_{1},\beta _{2},\dots ,\beta _{n})\in \mathbb{N}^{n}$, we have
\begin{align} \label{eq delta X beta abcde1e}
\delta (X^{\beta })
& =(1-\alpha^{\beta })X^{\beta }-\sum_{i=1}^{\beta _{n-1}}\binom{\beta _{n-1}}{i}%
\alpha^{\beta -ie_{n-1}}X^{\beta -ie_{n-1}+ie_{n}},
\end{align}
where it is understood that the sum is $0$ whenever $\beta _{n-1}=0.$ Let
$$ S=\{ (\beta
_{1},\beta _{2},\dots ,\beta _{n})\in \mathbb{N}^{n}\mid \beta _{n}>0\}.$$ We have to prove that $X^{\beta
}\in \im\delta $ for any $\beta \in S$. We proceed by induction on $\beta $ $%
\in S$ according to the graded lexicographical order of $\mathbb{N}^{n}$ with $e_{1}>e_{2}>
\cdots> e_{n}$. First we have $x_{n}= (1-a_{n-1})^{-1}\delta ( %
x_{n})$ if $a_{n-1}\neq 1$, and $x_{n}= -\delta (x_{n-1})$ if $%
a_{n-1}=1.$ Thus $x_{n}\in \im\delta $. Given  $\beta= (\beta
_{1},\beta _{2},\dots ,\beta _{n}) \in S$, suppose that $%
X^{\gamma }\in \im\delta $ for all $\eta \in S$ with $\beta >\gamma  $. Then
we need to prove $X^{\beta }\in \im\delta $. The proof splits two cases.

If $\alpha^{\beta }\neq 1$, then by (\ref{eq delta X beta abcde1e}), we have
\begin{equation}
X^{\beta }\equiv \frac{1}{1-\alpha^{\beta }}\sum_{i=1}^{\beta _{n-1}}\binom{\beta
_{n-1}}{i}\alpha^{\beta -ie_{n-1}}X^{\beta -ie_{n-1}+ie_{n}}\pmod{\im\delta}.
\end{equation}
When $\beta _{n-1}=0$,   $X^{\beta }\equiv 0\pmod{\im\delta}$.  When $\beta
_{n-1}>0$,  we have $\beta -ie_{n-1}+ie_{n}\in S$, and $ \beta > \beta
-ie_{n-1}+ie_{n} $ for $i=1,2,\ldots ,\beta _{n-1}$. By the
induction hypothesis we have $X^{\beta }\equiv 0\pmod{\im\delta}$.

If $\alpha^{\beta }=1$, then by (\ref{eq delta X beta abcde1e}), we have
\begin{equation*}
\delta (X^{\beta +e_{n-1}-e_{n}})=-\sum_{i=1}^{\beta _{n-1}+1}\binom{\beta
_{n-1}+1}{i}\alpha^{\beta -(i-1)e_{n-1}-e_{n}}X^{\beta -(i-1)e_{n-1}+(i-1)e_{n}}.
\end{equation*}%
When $\beta _{n-1}=0$, we have $X^{\beta }\equiv -\frac{1}{\alpha^{\beta -e_{n}}}%
\delta (X^{\beta +e_{n-1}-e_{n}})\equiv 0\pmod{\im\delta}$  since $\beta\in S$. When $\beta
_{n-1}>0$, we have
\begin{equation*}
X^{\beta }\equiv -\frac{1}{(\beta _{n-1}+1)\alpha^{\beta -e_{n}}}%
\sum_{i=2}^{\beta _{n-1}+1}\binom{\beta _{n-1}+1}{i}\alpha^{\beta
-(i-1)e_{n-1}-e_{n}}X^{\gamma_i}\pmod{\im\delta},
\end{equation*}
where $\gamma_i=\beta -(i-1)e_{n-1}+(i-1)e_{n}$. Since $\gamma_i\in S$ and $\beta > \gamma_i  $ for $i=2,\ldots ,\beta _{n-1}+1$, by
the induction hypothesis we have $X^{\beta }\equiv 0\pmod{\im\delta}$.

(2) We now prove that $\im\delta $ is an MZ-subspace of $K[X]$. To do this,
by Lemma \ref{lemidealmz}, we only need to show $\im\delta /(x_{n})$ is an
MZ-subspace of $K[X]/(x_{n})$. Note that $K[X]/(x_{n})=K[y_{1},y_{2},\dots
,y_{n-1}]$ is a polynomial ring in the variables $y_{1},y_{2},\dots ,y_{n-1}$%
. Let $\bar{\phi}$ be the $K$-endomorphism of $K[X]/(x_{n})$ induced by $%
\phi $. Then $\bar{\phi}(y_{i})=a_{i}y_{i}$ for all $1\leq i\leq n-1$. Let $%
\bar{\delta}=\id-\bar{\phi}$. Then $\im\bar{\delta}=\im\delta /(x_{n})$. By
Lemma \ref{lem delta abcde}, $\im\bar{\delta}$ is an MZ-subspace of $%
K[y_{1},y_{2},\dots ,y_{n-1}]$, and so $\im\delta /(x_{n})$ is an MZ-subspace of $K[X]/(x_{n})$, as desired.
\end{proof}

\begin{cor}
\label{cor delta x y} Let $\delta= \id -\phi$ be a linear $K$-$%
\mathcal{E}$-derivation of $K[x_{1},x_{2}]$. Then $\im \delta$ is an
MZ-subspace of $K[x_{1},x_{2}]$.
\end{cor}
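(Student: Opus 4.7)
The plan is to deduce the corollary from the general lemmas already established for $K[X]$ with arbitrary $n$, combined with the classification of linear endomorphisms up to linear conjugation. First I would use Lemma \ref{lem1eta} to reduce to the case where $K$ is algebraically closed: since $\delta$ is $K$-linear and $\id \otimes \delta$ corresponds to the tensored-up linear $\mathcal{E}$-derivation, showing that the image is an MZ-subspace over the algebraic closure $\bar K$ suffices.

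Once $K$ is algebraically closed, I would invoke the same Jordan-form argument that produced Lemma \ref{lem delta case123} in the three-variable case, now applied to $n=2$. There are then exactly two normal forms (up to conjugation by a linear $K$-automorphism of $K[x_1,x_2]$):
\begin{enumerate}
\item $\phi(x_1)=ax_1$, $\phi(x_2)=bx_2$, with $a,b\in K$;
\item $\phi(x_1)=ax_1+x_2$, $\phi(x_2)=ax_2$, with $a\in K$.
\end{enumerate}
Conjugation by a linear $K$-automorphism preserves the property of being a linear $K$-$\mathcal{E}$-derivation and preserves the property that $\im\delta$ is an MZ-subspace (since the automorphism maps $\im\delta$ bijectively onto $\im(\sigma\delta\sigma^{-1})$), so it is enough to treat these two cases.

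Case (1) is the diagonal case and is immediate from Lemma \ref{lem delta abcde} with $n=2$. Case (2) is precisely the statement of Lemma \ref{lem delta abcdee1} specialized to $n=2$: the hypotheses for $1\leq i\leq n-2=0$ are vacuous, so only the two remaining conditions $\phi(x_{n-1})=a_{n-1}x_{n-1}+x_n$ and $\phi(x_n)=a_{n-1}x_n$ apply, which match Case (2) verbatim. Hence $\im\delta$ is an MZ-subspace of $K[x_1,x_2]$ in both normal forms.

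There is essentially no obstacle here, since all the heavy lifting was done in Lemmas \ref{lem delta abcde} and \ref{lem delta abcdee1}, which were proved for arbitrary $n$. The only thing to be careful about is checking that the two-variable Jordan classification really produces only these two forms (in particular that there is no case analogous to the three-block $3\times 3$ Jordan form of part (3) of Lemma \ref{lem delta case123}), which is automatic since a $2\times 2$ Jordan matrix is either diagonal or a single $2\times 2$ Jordan block.
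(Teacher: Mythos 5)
Your proof is correct and follows exactly the same route as the paper: reduce to an algebraically closed field via Lemma \ref{lem1eta}, classify $\phi$ into the diagonal and single-Jordan-block normal forms, and apply Lemma \ref{lem delta abcde} and Lemma \ref{lem delta abcdee1} (with $n=2$) respectively. No issues.
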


\begin{proof}Without loss of generality we assume that $K$ is algebraically closed and $\phi$ is defined by one of the following conditions: \begin{enumerate}
\item $%
\phi(x_{1})=ax_{1},~\phi(x_{2})=bx_{2}$,
\item $%
\phi(x_{1})=ax_{1}+x_{2},~\phi(x_{2})=ax_{2}$,
 \end{enumerate}
 where $a,b\in K$. Then the two cases  follow  from Lemma \ref{lem delta abcde} and  Lemma \ref{lem delta abcdee1}, respectively.
\end{proof}
%

Let $D$ be a locally nilpotent $K$-derivation of $K[X]$. Then $D$ induces the exponential automorphism $e^D=\sum_{k=0}^{\infty }\frac{D^{k}}{k!}$ of $K[X]$.

From \cite[Corollary 2.4]{Zhao 1-1} we have the following lemma.
\begin{lem}
Let $D$ be a locally nilpotent $K$-derivations of $K[X]$. Then $\im(1-e^{D})=\im D$.
\label{lem1-1}
\end{lem}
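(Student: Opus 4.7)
The plan is to factor $1-e^{D}$ through $D$ and exploit local nilpotency to invert the remaining factor. Since $D$ is locally nilpotent, for every $a\in K[X]$ the series $e^{D}(a)=\sum_{k\ge 0}D^{k}(a)/k!$ is in fact a finite sum, so $e^{D}$ is a well-defined $K$-linear map, and
\begin{equation*}
1-e^{D} \;=\; -\sum_{k\ge 1}\frac{D^{k}}{k!} \;=\; -D\circ\Phi, \qquad \Phi \;:=\; \sum_{k\ge 0}\frac{D^{k}}{(k+1)!}.
\end{equation*}
Since $\Phi$ is a power series in $D$, it commutes with $D$, and for each $a\in K[X]$ the sum $\Phi(a)$ is again finite, so $\Phi$ is a well-defined $K$-linear operator on $K[X]$.

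The inclusion $\im(1-e^{D})\subseteq \im D$ is immediate from the factorization: for any $a\in K[X]$ we have $(1-e^{D})(a)=-D(\Phi(a))\in \im D$.

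For the reverse inclusion $\im D\subseteq \im(1-e^{D})$, the key observation is that $\Phi=\id+D\cdot\Psi$ with $\Psi:=\sum_{k\ge 0}D^{k}/(k+2)!$, and $D\Psi$ is still locally nilpotent (on the $D$-invariant finite-dimensional subspace generated by any element, $D$ is nilpotent, hence so is $D\Psi$). Consequently $\Phi=\id+(\text{locally nilpotent})$ is $K$-linearly bijective: its inverse can be given by the locally finite series $\sum_{j\ge 0}(-D\Psi)^{j}$. Now given $b=D(c)\in \im D$, solve $\Phi(a)=-c$ for $a\in K[X]$; then $(1-e^{D})(a)=-D(\Phi(a))=-D(-c)=b$, so $b\in \im(1-e^{D})$.

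The only nontrivial point is the bijectivity of $\Phi$, and that follows routinely from the fact that local nilpotency of $D$ is inherited by any polynomial (or formal power series) in $D$ with nonzero constant term $1$. Thus no serious obstacle arises; I would mention that one could alternatively cite \cite[Corollary 2.4]{Zhao 1-1} directly, as the authors do.
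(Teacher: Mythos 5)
Your argument is correct. Note, however, that the paper does not prove this lemma at all: it simply quotes it from Zhao's paper (\cite[Corollary 2.4]{Zhao 1-1}), so there is no ``paper proof'' to match; what you have written is a self-contained elementary proof of the special case actually needed here. Your factorization $1-e^{D}=-D\circ\Phi$ with $\Phi=\sum_{k\ge 0}D^{k}/(k+1)!$ is the standard device: the inclusion $\im(1-e^{D})\subseteq\im D$ is immediate, and for the converse you correctly observe that $\Phi=\id+D\Psi$ with $D\Psi$ locally nilpotent (nilpotent on each finite-dimensional $D$-invariant subspace, since $\Psi$ commutes with $D$ and preserves the span of the $D^{k}(a)$), hence $\Phi$ is invertible via the locally finite Neumann series $\sum_{j\ge 0}(-D\Psi)^{j}$, and $D(c)=(1-e^{D})\bigl(\Phi^{-1}(-c)\bigr)$. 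Every step checks out. The trade-off versus the paper's citation is the usual one: Zhao's Corollary 2.4 is stated in greater generality (for locally finite maps of general algebras), whereas your argument is shorter, elementary, and keeps the paper self-contained for the locally nilpotent polynomial case that is actually used in Lemma \ref{lem DB}. Either route is acceptable; if you include your proof, it would be worth stating explicitly at the outset that $e^{D}$ and $\Phi$ are well defined because each $a$ satisfies $D^{N}(a)=0$ for some $N$, which you do mention.
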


In what follows,  write $\mathcal{R}=K[x_1,x_2,x_3]$ and denote by $X$ the triple $(x_{1},x_{2},x_{3})$.

 For $a\in K^*$ we define a $K$-endomorphism  $\phi _{a}$  of $\mathcal{R}$ as follows
\begin{equation}\label{eqphia}
   \phi _{a}(x_{1})=ax_{1},~\phi _{a}(x_{2})=ax_{1}+ax_{2},~\phi _{a}(x_{3})=-%
\frac{1}{2}ax_{1}-ax_{2}+ax_{3},
 \end{equation}%
and denote by $ \delta _{a}$ the  $K$-$
\mathcal{E}$-derivation $\id-\delta_a$.  Then for any $\beta
=(\beta _{1},\beta _{2},\beta _{3})\in \mathbb{N}^{3}$, we have
\begin{equation}
\delta _{a}(X^{\beta })=X^{\beta }-a^{|\beta |}x_{1}^{\beta
_{1}}(x_{1}+x_{2})^{\beta _{2}}(-\frac{1}{2}x_{1}-x_{2}+x_{3})^{\beta _{3}}.
\label{eq delta a xmynzk}
\end{equation}%

Consider the standard gradation  $\mathcal{R}  =\oplus
_{i\in \mathbb{N}}\mathcal{R}_{i}$, where $\mathcal{R}_{i}$ is the set of
homogeneous polynomials of degree $i$. Given a positive integer $m$, let
$\mathcal{B}=\oplus _{i\mid m }\mathcal{R}_{i}$ and $\mathcal{C}%
=\oplus _{i\nmid m}\mathcal{R}_{i}$. Then $\mathcal{R}=%
\mathcal{B}\oplus \mathcal{C}$.


%
%
%
%
%
%

\begin{lem}
\label{lem C} If $a\in K$ is an $ m  $th primitive root of unity, then $\delta _{a}(\mathcal{C})=\mathcal{C}.$
\end{lem}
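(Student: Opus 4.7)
The plan is to exploit the fact that $\phi_a$, and therefore $\delta_a = \id - \phi_a$, is a graded $K$-linear map on $\mathcal{R} = \bigoplus_i \mathcal{R}_i$: each $\phi_a(x_j)$ is homogeneous of degree $1$, so $\phi_a(\mathcal{R}_i) \subseteq \mathcal{R}_i$. This immediately gives $\delta_a(\mathcal{C}) \subseteq \mathcal{C}$. To establish the reverse inclusion it suffices, for each component $\mathcal{R}_i$ appearing in $\mathcal{C}$ (equivalently each $i$ with $a^i \ne 1$), to show that the restriction $\delta_a|_{\mathcal{R}_i} \colon \mathcal{R}_i \to \mathcal{R}_i$ is a linear automorphism of the finite-dimensional space $\mathcal{R}_i$.

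For the automorphism claim I will read off the leading term of $\delta_a(X^\beta)$ from (\ref{eq delta a xmynzk}), using the lexicographic order on $\mathcal{R}$ with $x_3 > x_2 > x_1$. In this order $\LT(x_1 + x_2) = x_2$ and $\LT(-\tfrac{1}{2}x_1 - x_2 + x_3) = x_3$, so by multiplicativity of leading terms
\[ \LT\bigl(x_1^{\beta_1}(x_1+x_2)^{\beta_2}(-\tfrac{1}{2}x_1 - x_2 + x_3)^{\beta_3}\bigr) = X^\beta. \]
Plugging into (\ref{eq delta a xmynzk}) yields $\delta_a(X^\beta) = (1 - a^{|\beta|})X^\beta + g_\beta$, where $g_\beta$ is homogeneous of degree $|\beta|$ and each of its monomials is strictly smaller than $X^\beta$.

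Fix $i$ with $a^i \ne 1$ and list the monomials of $\mathcal{R}_i$ in decreasing lex order. The previous display shows that the matrix of $\delta_a|_{\mathcal{R}_i}$ in this ordered basis is upper triangular with every diagonal entry equal to the nonzero scalar $1 - a^i$, hence invertible, so $\delta_a(\mathcal{R}_i) = \mathcal{R}_i$. Summing over all components of $\mathcal{C}$ then yields $\mathcal{C} \subseteq \delta_a(\mathcal{C})$, which completes the proof. The only real computation is the leading-term identity above, which is immediate once the lex order is chosen so that the ``new variable'' in each $\phi_a(x_j)$ is the largest; no separate obstacle is expected. This is essentially a degreewise analogue of Lemma~\ref{lemeta}, which itself cannot be applied on all of $\mathcal{R}$ because the leading-term hypothesis fails on precisely the summands of $\mathcal{B}$.
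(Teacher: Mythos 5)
Your proposal is correct and is essentially the paper's own argument: both rest on the same leading-term computation $\LT(\delta_a(X^\beta))=(1-a^{|\beta|})X^\beta$ in the lex order with $x_3>x_2>x_1$, together with the observation that $1-a^{|\beta|}\neq 0$ on every graded piece of $\mathcal{C}$. The paper packages this as an induction on monomials ordered by graded lex, while you phrase it as triangularity (with nonzero diagonal) of the matrix of $\delta_a$ restricted to each finite-dimensional $\mathcal{R}_i$; these are the same argument in different clothing.
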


\begin{proof}Since $\delta_a$ is a graded $K$-linear map of $\mathcal{R}$, we have
 $\delta _{a}(\mathcal{C})\subseteq
\mathcal{C}$. Now we show the opposite inclusion.
It suffices to prove that
each monomial in $\mathcal{C}$ is contained in $\im\delta _{a}$.   If $a=1$, then $m=1$ and $\mathcal{C}=0$. Nothing needs to prove. Suppose $a\ne 1$. We proceed
by induction according to the graded lexicographical order on $K[X]$
with $x_{3}> x_{2}>x_{1}$. First we have $x_{1}=(1-a)^{-1}\delta
_{a}(x_{1})\in \im\delta _{a}$. Given   $X^{\beta }\in \mathcal{C}$, suppose
that $X^{\gamma }\in \im\delta _{a}$ for all $X^{\gamma }\in \mathcal{C}$ with $%
X^{\beta }> X^{ \gamma}$. Then we need to prove $X^{\beta }\in \im\delta $. Note that
\begin{multline*}
 \LT \left( x_{1}^{\beta _{1}}(x_{1}+x_{2})^{\beta _{2}}(-\frac{1}{2}%
x_{1}-x_{2}+x_{3})^{\beta _{3}}\right) \\ =\left( \LT \left( x_{1}\right) \right)
^{\beta _{1}}\left( \LT (x_{1}+x_{2})\right) ^{\beta _{2}}\left( \LT (-\frac{1}{2%
}x_{1}-x_{2}+x_{3})\right) ^{\beta _{3}}=X^{\beta } .
\end{multline*}
Thus by (\ref{eq delta a xmynzk}) we have
$
\delta _{a}(X^{\beta })=(1-a^{|\beta |})X^{\beta }+f,
$
where $f\in \mathcal{C}$   and its each term   is contained in $\mathcal{C}$ and is
less than $X^{\beta }$. By induction hypothesis $f\in \im\delta _{a}$. Thus $%
X^{\beta }\in \im\delta _{a}$, as desired.
\end{proof}

\begin{lem}
\label{lem DB}  If $a\in K$ is an $ m  $th primitive root of unity, then $\delta _{a}(\mathcal{B})=D(\mathcal{B})$, where $%
D=x_{1}\partial _{x_{2}}-x_{2}\partial _{x_{3}}$ is a $K$-derivation of $%
\mathcal{R}$.
\end{lem}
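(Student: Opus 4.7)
The plan is to exhibit $\phi_1$ as $e^D$ and then mimic the argument behind Lemma \ref{lem1-1} on the graded subspace $\mathcal{B}$. First I would verify on generators that the endomorphism $\phi_1$ obtained by setting $a=1$ in \eqref{eqphia} coincides with $e^D$: one computes $D(x_1)=0$, $D(x_2)=x_1$, $D(x_3)=-x_2$, and $D^2(x_3)=-x_1$, whence $e^D(x_1)=x_1$, $e^D(x_2)=x_1+x_2$, and $e^D(x_3)=-\tfrac{1}{2}x_1-x_2+x_3$, matching $\phi_1$. Since $\phi_a$ is a ring homomorphism with $\phi_a(x_i)=a\,\phi_1(x_i)$ for each generator, it follows that $\phi_a(f)=a^{i}e^D(f)$ for every $f\in \mathcal{R}_i$. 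Because $a$ is a primitive $m$-th root of unity, $a^i=1$ whenever $\mathcal{R}_i\subseteq\mathcal{B}$, so $\delta_a$ and $\id-e^D$ agree on every homogeneous summand of $\mathcal{B}$, hence on all of $\mathcal{B}$.

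Next I would factor
\[
\id-e^D \;=\; -\sum_{k\geq 1}\frac{D^k}{k!} \;=\; -D\circ\Phi, \qquad \Phi:=\sum_{k\geq 0}\frac{D^k}{(k+1)!},
\]
in the spirit of the proof of Lemma \ref{lem1-1}, and observe that both $D$ and $\Phi$ preserve every homogeneous component $\mathcal{R}_i$ since $D=x_1\partial_{x_2}-x_2\partial_{x_3}$ is homogeneous of degree $0$ in the standard gradation. On each finite-dimensional piece $\mathcal{R}_i$ the operator $D|_{\mathcal{R}_i}$ is nilpotent (being a restriction of a locally nilpotent derivation), so $\Phi|_{\mathcal{R}_i}=\id+\tfrac{1}{2}D|_{\mathcal{R}_i}+\cdots$ is the identity plus a nilpotent and therefore invertible. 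Piecing these pieces together, $\Phi$ restricts to a bijection of $\mathcal{B}$ onto itself, and combining with the first paragraph gives
\[
\delta_a(\mathcal{B}) \;=\; (\id-e^D)(\mathcal{B}) \;=\; -D\bigl(\Phi(\mathcal{B})\bigr) \;=\; -D(\mathcal{B}) \;=\; D(\mathcal{B}),
\]
as required.

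The main obstacle, and essentially the only non-routine point, is spotting the identification $\phi_1=e^D$; once that is in hand the rest is the standard factorization $\id-e^D=-D\circ\Phi$ with $\Phi$ invertible used in the proof of Lemma \ref{lem1-1}, and the restriction to the subspace $\mathcal{B}$ causes no trouble because $D$ is homogeneous of degree zero and therefore respects the decomposition $\mathcal{R}=\mathcal{B}\oplus\mathcal{C}$.
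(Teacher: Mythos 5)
Your proof is correct, and its two main ingredients coincide with the paper's: the identification $\phi_1=e^{D}$ (verified on generators exactly as you do) and the observation that $\phi_a=a^{i}\phi_1$ on $\mathcal{R}_i$, so that $\delta_a$ and $\delta_1=\id-e^{D}$ agree on $\mathcal{B}$ when $a^m=1$. Where you diverge is in the final step, establishing $\delta_1(\mathcal{B})=D(\mathcal{B})$: the paper notes that both $\delta_1$ and $D$ are graded, deduces $\delta_1(\mathcal{B})=\mathcal{B}\cap\im\delta_1$ and $D(\mathcal{B})=\mathcal{B}\cap\im D$, and then invokes Lemma \ref{lem1-1} (the cited fact $\im(\id-e^{D})=\im D$) as a black box; you instead inline the standard factorization $\id-e^{D}=-D\circ\Phi$ with $\Phi=\sum_{k\geq 0}D^{k}/(k+1)!$ unipotent, hence bijective, on each $\mathcal{R}_i$, which gives $\delta_1(\mathcal{B})=D(\Phi(\mathcal{B}))=D(\mathcal{B})$ directly. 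Your route is self-contained and in fact yields the graded refinement of Lemma \ref{lem1-1} on every homogeneous component, while the paper's route is shorter on the page at the cost of an external citation; both are sound, and the restriction to $\mathcal{B}$ is unproblematic in either case because $D$ preserves the standard gradation.
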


\begin{proof}
One can check   $e^{D}=\phi _{1}$, where $\phi _{1}$ is defined in (\ref%
{eqphia}) with $a=1$.  Since $\delta _{1}=\id -\phi _{1}$ is a
graded $K$-linear map of $\mathcal{R},$ we have $\delta _{1}(%
\mathcal{B})\subseteq \mathcal{B}$ and $\delta _{1}(\mathcal{C})\subseteq
\mathcal{C}$, whence $\im\delta _{1}=\delta _{1}(\mathcal{B})\oplus \delta
_{1}(\mathcal{C})$. It follows that $\delta _{1}(\mathcal{B})=\mathcal{B}%
\cap \im\delta _{1}$. Similarly, $D(\mathcal{B})=\mathcal{B}\cap \im D$. By
Lemma \ref{lem1-1}, we have $\delta _{1}(\mathcal{B})=D(\mathcal{B})$.
From (\ref{eq delta a xmynzk}), we can see that $\delta _{a}(X^{\beta })=\delta
_{1}(X^{\beta })$ for all $X^{\beta }\in \mathcal{B}$, which implies that $%
\delta _{a}(\mathcal{B})=\delta _{1}(\mathcal{B}).$ Thus $\delta _{a}(%
\mathcal{B})=D(\mathcal{B})$.
\end{proof}

\begin{lem}
\label{lem delta>D}  If $a\in K$ is an $ m  $th primitive root of unity, then $\im\delta _{a}\supset \im D$.
\end{lem}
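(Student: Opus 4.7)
The plan is to exploit the decomposition $\mathcal{R}=\mathcal{B}\oplus \mathcal{C}$ together with the two preceding lemmas, which respectively compute $\delta_{a}$ on each summand. The key observation I will make up front is that both $\delta_{a}$ and $D$ are graded $K$-linear maps of $\mathcal{R}$ with respect to the standard grading (for $D=x_{1}\partial_{x_{2}}-x_{2}\partial_{x_{3}}$ this is immediate since $x_{i}\partial_{x_{j}}$ preserves total degree). Because $\mathcal{B}$ and $\mathcal{C}$ are each spanned by full graded pieces, any graded linear map $\eta$ of $\mathcal{R}$ satisfies $\eta(\mathcal{B})\subseteq \mathcal{B}$ and $\eta(\mathcal{C})\subseteq \mathcal{C}$.

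From this I obtain that images split according to the decomposition: $\im\delta_{a}=\delta_{a}(\mathcal{B})+\delta_{a}(\mathcal{C})$ and $\im D=D(\mathcal{B})+D(\mathcal{C})$, and in each case the sum is direct because $\mathcal{B}\cap\mathcal{C}=0$. Now I apply Lemma~\ref{lem DB} to rewrite $\delta_{a}(\mathcal{B})=D(\mathcal{B})$ and Lemma~\ref{lem C} to rewrite $\delta_{a}(\mathcal{C})=\mathcal{C}$, which gives
\begin{equation*}
\im\delta_{a}=D(\mathcal{B})\oplus \mathcal{C}.
\end{equation*}
On the other hand, $\im D=D(\mathcal{B})\oplus D(\mathcal{C})$, and since $D(\mathcal{C})\subseteq \mathcal{C}$ by gradedness, the containment $\im D\subseteq \im\delta_{a}$ follows immediately. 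If strictness is intended by the symbol $\supset$, I would point out (provided $m>1$, otherwise $\mathcal{C}=0$ and equality holds) that $x_{3}\in \mathcal{R}_{1}\subseteq \mathcal{C}$ lies in $\im\delta_{a}$, while $D(\mathcal{R}_{1})=Kx_{1}+Kx_{2}$ does not contain $x_{3}$, so $x_{3}\notin \im D$.

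Given how much preparation has already been done in Lemmas~\ref{lem C} and \ref{lem DB}, I do not anticipate any serious obstacle; the only thing that requires care is the bookkeeping confirming that $\delta_{a}$ and $D$ both respect the $\mathcal{B}/\mathcal{C}$ decomposition so that images may be computed summand by summand.
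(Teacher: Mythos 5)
Your proof is correct and is essentially identical to the paper's: the paper likewise writes $\im\delta_{a}=\mathcal{C}\oplus D(\mathcal{B})\supset D(\mathcal{C})\oplus D(\mathcal{B})=\im D$ using Lemmas~\ref{lem C} and \ref{lem DB} together with $D(\mathcal{C})\subseteq\mathcal{C}$. Your extra remarks on gradedness and on strictness of the inclusion are fine but not needed beyond what the paper records.
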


\begin{proof}
Since $D(\mathcal{C})\subseteq \mathcal{C}$, we have by Lemma \ref{lem C} and Lemma
\ref{lem DB},
\begin{equation*}
\im\delta _{a}=\mathcal{C}\oplus D(\mathcal{B})\supset D(\mathcal{C})\oplus
D(\mathcal{B})=\im D,
\end{equation*}%
as desired.
\end{proof}

Let $\omega =(-1,0,1)$. For a nonzero polynomial $f=\sum a_{\beta
}X^{\beta }\in \mathcal{R}$ we define $\deg _{\omega }f=\max \{\omega \beta \mid $ $%
a_{\beta }\neq 0\}$ and set $\deg _{\omega }0=-\infty .$ Then $\deg
_{\omega }$ is a degree function on $\mathcal{R}$.

\begin{lem}
\label{lem m>k D}\cite[Lemma 3.2]{LiuSun} Let $D=x_{1}\partial
_{x_{2}}-x_{2}\partial _{x_{3}}$ be a $K$-derivation of $\mathcal{R}$.  If $\deg _{\omega }X^{\beta }<0$, then $X^{\beta }\in \im D$.
\end{lem}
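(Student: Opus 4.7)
The plan is to show $X^{\beta}\in \im D$ by induction on $\beta_{3}$. First, noting that $\omega\beta=-\beta_{1}+\beta_{3}$, the hypothesis $\deg_{\omega}X^{\beta}<0$ is equivalent to $\beta_{1}>\beta_{3}$, so in particular $\beta_{1}\geq 1$.

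Next I would record that $D(X^{\gamma})=\gamma_{2}X^{\gamma+e_{1}-e_{2}}-\gamma_{3}X^{\gamma+e_{2}-e_{3}}$ and observe that both shift vectors $e_{1}-e_{2}$ and $e_{2}-e_{3}$ have $\omega$-degree $-1$, so that $D$ is $\omega$-homogeneous of degree $-1$.

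I would then apply this formula to the preimage candidate $\gamma=\beta-e_{1}+e_{2}$, which lies in $\mathbb{N}^{3}$ since $\beta_{1}\geq 1$. In the base case $\beta_{3}=0$, the second term in $D(X^{\gamma})$ vanishes and the first term is a nonzero scalar multiple of $X^{\beta}$, giving $X^{\beta}\in\im D$. In the inductive step $\beta_{3}\geq 1$, the same formula yields $X^{\beta}\equiv c\,X^{\beta'}\pmod{\im D}$ for a nonzero $c\in K$, where $\beta'=\beta-e_{1}+2e_{2}-e_{3}$. One checks $\beta'\in\mathbb{N}^{3}$ (using $\beta_{1}-1\geq\beta_{3}\geq 1$), $\omega\beta'=\omega\beta<0$, and $\beta'_{3}=\beta_{3}-1$, so the induction hypothesis applies and delivers $X^{\beta'}\in\im D$, whence $X^{\beta}\in\im D$.

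The main obstacle to anticipate is that, since $D$ preserves $\omega$-degree up to a uniform shift, $\deg_{\omega}X^{\beta}$ itself cannot serve as the induction parameter; one has to identify a secondary quantity (here $\beta_{3}$) whose strict decrease is engineered by a careful choice of preimage, while simultaneously keeping that preimage in $\mathbb{N}^{3}$. The hypothesis $\beta_{1}>\beta_{3}$ is exactly what guarantees both conditions at every step of the recursion.
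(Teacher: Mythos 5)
Your proof is correct. Note that the paper does not actually prove this lemma --- it is quoted verbatim from \cite[Lemma 3.2]{LiuSun} --- so there is no in-paper argument to compare against; but your self-contained induction on $\beta_{3}$ checks out: $\deg_{\omega}X^{\beta}<0$ is indeed $\beta_{1}>\beta_{3}$, the identity $D(X^{\beta-e_{1}+e_{2}})=(\beta_{2}+1)X^{\beta}-\beta_{3}X^{\beta-e_{1}+2e_{2}-e_{3}}$ is right, the chosen preimage and the shifted exponent both stay in $\mathbb{N}^{3}$ precisely because of the hypothesis, the $\omega$-degree is preserved along the recursion while $\beta_{3}$ strictly drops, and $\beta_{2}+1\neq 0$ in characteristic zero closes each step.
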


From   the proof of \cite[Lemma 3.4]{LiuSun}, we can formulate
the following fact.

\begin{lem}
\label{lem f^i imD} Let $D=x_{1}\partial
_{x_{2}}-x_{2}\partial _{x_{3}}$ be a $K$-derivation of $\mathcal{R}$.  If $%
f^{i}\in\im D$ for all $i\geq1$, then $\deg_{\omega}f<0$.
\end{lem}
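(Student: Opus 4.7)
The plan is to argue the contrapositive: assume $\deg_\omega f \ge 0$ and exhibit some $i$ with $f^i \notin \im D$.

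Since $\omega = (-1,0,1)$ makes $D$ homogeneous of $\omega$-degree $-1$ ($D(x_2)=x_1$ and $D(x_3)=-x_2$ each drop $\omega$-degree by one), the image $\im D$ is itself $\omega$-graded. In particular the top $\omega$-component of $f^i$, namely $(f_d)^i$ where $f_d$ is the top $\omega$-component of $f$ (of $\omega$-degree $d = \deg_\omega f$), lies in $\im D$ whenever $f^i$ does. Replacing $f$ by $f_d$, I may assume $f$ is nonzero and $\omega$-homogeneous of $\omega$-degree $d \ge 0$, and the goal becomes to derive $f = 0$.

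Next I would introduce coordinates adapted to $\ker D = K[x_1, w]$, where $w = x_1 x_3 + x_2^2/2$. Set $u = x_1 x_3$ and $v = w$, so $x_2^2 = 2(v-u)$. The decomposition $\mathcal{R} = K[x_1, w, x_3] \oplus x_2 K[x_1, w, x_3]$ as a $\ker D$-module restricts on $\mathcal{R}^{(d)}$ ($d \ge 0$) to $\mathcal{R}^{(d)} = x_3^d K[u, v] \oplus x_2 x_3^d K[u, v]$. Every $\omega$-homogeneous $f$ of $\omega$-degree $d$ thus has the form $f = x_3^d p(u, v) + x_2 x_3^d q(u, v)$ with $(p, q) \neq (0, 0)$ in $K[u, v]^2$, and a direct calculation identifies the restriction $D\colon \mathcal{R}^{(d+1)} \to \mathcal{R}^{(d)}$ with the map $(r, s) \mapsto (F_{d+1} s,\, -E_{d+1} r)$, where $E_{d+1} = (d+1) + u \partial_u$ (with only positive eigenvalues on monomials, hence bijective on $K[u, v]$) and $F_{d+1} = u(1 + 2 E_{d+1}) - 2 v E_{d+1}$.

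Because $E_{id+1}$ is invertible, the condition $f^i \in \im D$ reduces to $P_i \in \im F_{id+1}$, where $(P_i, Q_i) \in K[u, v]^2$ represents $f^i$. Binomial expansion of $f^i = x_3^{id}(p + x_2 q)^i$ together with $x_2^2 = 2(v-u)$ gives explicit formulas for $P_i$ and $Q_i$ in terms of $p, q$. For $d' \ge 1$, $F_{d'}$ is injective on $K[u, v]$ and raises total degree by $1$, so $K[u, v]/\im F_{d'}$ has one-dimensional components in each total degree: from $F_{d'}(u^k v^{T-k-1}) = (2d' + 2k + 1) u^{k+1} v^{T-k-1} - 2(d' + k) u^k v^{T-k}$ every monomial of total degree $T$ is a nonzero multiple of $u^T$ modulo $\im F_{d'}$. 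This yields linear functionals $\chi_T^{(d')}$ whose vanishing on $P_i$ (with $d' = id+1$) is equivalent to $P_i \in \im F_{id+1}$. The main obstacle is to verify that for $(p, q) \neq (0, 0)$ there is some $i$ with $\chi_T^{(id+1)}(P_i) \neq 0$ for some $T$; I expect to establish this by examining the top total-degree component of $P_i$ and showing directly that the resulting $\chi$-value cannot vanish for every $i$.
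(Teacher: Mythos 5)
Your setup is correct and checks out: $D$ is $\omega$-homogeneous of degree $-1$, so $\im D$ is $\omega$-graded and you may replace $f$ by its top $\omega$-component; $\ker D=K[x_1,w]$ with $w=x_1x_3+\tfrac12 x_2^2$; $\mathcal{R}=K[x_1,w,x_3]\oplus x_2K[x_1,w,x_3]$ restricts in $\omega$-degree $d\ge 0$ to $x_3^dK[u,v]\oplus x_2x_3^dK[u,v]$; the matrix form $(r,s)\mapsto(F_{d+1}s,-E_{d+1}r)$ is a correct computation; $E_{d'}$ is bijective, $F_{d'}$ is injective and raises total degree by one, so the cokernel is one-dimensional in each total degree and is detected by the functional $\chi_T^{(d')}$ with $\chi_T^{(d')}(u^kv^{T-k})=\prod_{j=k}^{T-1}\frac{2d'+2j+1}{2(d'+j)}$. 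All of this reduces the lemma, correctly, to: for $(p,q)\ne(0,0)$ there exist $i$ and $T$ with $\chi_T^{(id+1)}\bigl((P_i)_T\bigr)\ne 0$, where $P_i=\sum_k\binom{i}{2k}2^k p^{i-2k}q^{2k}(v-u)^k$.

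That last claim, however, is the entire content of the lemma, and you have not proved it; you only state that you ``expect'' to establish it from the top total-degree component of $P_i$. This is a genuine gap, and not a routine one. The functionals $\chi_T^{(id+1)}$ have factorial-type moments and change with $i$ (through $d'=id+1$), while the coefficients of $p$ and $q$ are arbitrary elements of a characteristic-zero (indeed algebraically closed, in the application) field, so positivity of $\chi$ on monomials gives no control over cancellation in $\chi\bigl((P_i)_T\bigr)$; already in the special case $q=0$, $P_i=p^i$, the statement ``$\chi^{(id+1)}(p^i)=0$ for all $i$ implies $p=0$'' is a factorial-conjecture-type assertion whose known proofs require nontrivial ($p$-adic/arithmetic) input. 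This is precisely why the paper does not prove the lemma directly but extracts it from the proof of Lemma 3.4 of Liu and Sun, whose article is devoted to establishing exactly the special case of the factorial conjecture needed here. To complete your argument you would need to either import that result or supply an actual proof of the nonvanishing claim; as written, the proposal reformulates the problem in convenient coordinates but stops short of the essential step.
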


\begin{lem}
\label{lem m>k delta}Let $a\in K$ be an $ m  $th primitive root of unity.  If $\deg_{\omega}X^{\beta}<0$, then $X^{\beta}\in \im  \delta_{a}$.
\end{lem}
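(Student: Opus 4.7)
The plan is to observe that this statement is essentially immediate from two lemmas already established in the excerpt. First I would invoke Lemma \ref{lem m>k D}, which tells us that for the derivation $D=x_1\partial_{x_2}-x_2\partial_{x_3}$, the hypothesis $\deg_\omega X^\beta<0$ already forces $X^\beta\in\im D$. Then I would use Lemma \ref{lem delta>D}, which gives the containment $\im D\subseteq \im\delta_a$ whenever $a$ is an $m$th primitive root of unity. Chaining these two memberships yields $X^\beta\in\im\delta_a$, which is exactly the conclusion.

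So the proof is really just the one-liner: $X^\beta\in\im D\subseteq\im\delta_a$. There is no genuine obstacle, since all of the work (the structural comparison of $\delta_a$ with $D$ via the decomposition $\mathcal{R}=\mathcal{B}\oplus\mathcal{C}$, and the explicit analysis of $D$ in Liu--Sun) has already been done in the preceding lemmas. The only thing I would double-check is that the hypothesis on $a$ being an $m$th primitive root of unity is indeed what is needed to trigger Lemma \ref{lem delta>D}, which matches the statement verbatim.

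If the authors want more than a two-line proof, a natural expansion is to record the logical flow explicitly: hypothesis $\deg_\omega X^\beta<0$ $\Rightarrow$ (Lemma \ref{lem m>k D}) $X^\beta\in\im D$ $\Rightarrow$ (Lemma \ref{lem delta>D}) $X^\beta\in\im\delta_a$. This makes clear that the lemma is the bridge allowing results about the locally nilpotent derivation $D$ (from \cite{LiuSun}) to be transferred to the $\mathcal{E}$-derivation $\delta_a$, which is presumably the use it will be put to in the subsequent theorem about case (3) of Lemma \ref{lem delta case123}.
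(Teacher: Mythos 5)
Your proposal is correct and coincides exactly with the paper's own proof: it applies Lemma \ref{lem m>k D} to get $X^{\beta}\in\im D$ and then Lemma \ref{lem delta>D} to conclude $X^{\beta}\in\im\delta_{a}$. Nothing further is needed.
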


\begin{proof}
From Lemma \ref{lem m>k D}, $X^{\beta}\in  \im D$. Then $X^{\beta}\in \im\delta_{a}$ by Lemma \ref{lem delta>D}.
\end{proof}

\begin{lem}
\label{lem f^i im delta}  Let $a\in K$ be an $ m  $th primitive root of unity.
If $f^{i}\in\im\delta_{a}$ for all $i\geq1$, then $\deg_{\omega}f<0$.
\end{lem}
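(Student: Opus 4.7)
My plan is to reduce to Lemma~\ref{lem f^i imD} by passing through the decomposition $\im\delta_a=\mathcal{C}\oplus D(\mathcal{B})$ from Lemmas~\ref{lem C} and~\ref{lem DB}, which says that any element of $\im\delta_a$ lying in $\mathcal{B}$ actually lies in $D(\mathcal{B})\subseteq\im D$. Since $\delta_a$ is graded, each standard-homogeneous component of $f^i$ is in $\im\delta_a$, and any such component of standard degree divisible by $m$ lands in $\mathcal{B}\cap\im\delta_a=D(\mathcal{B})\subseteq\im D$.

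Writing $f=\sum_d f_{(d)}$ with $d_1=\deg f$, the degree-$id_1$ component of $f^i$ is exactly $f_{(d_1)}^i$, with no cross terms. Taking $i$ to be any positive multiple of $m_1:=m/\gcd(m,d_1)$ forces $m\mid id_1$, so $f_{(d_1)}^i\in\im D$; setting $g:=f_{(d_1)}^{m_1}$, all powers $g^j$ lie in $\im D$, and Lemma~\ref{lem f^i imD} gives $\deg_\omega g<0$, hence $\deg_\omega f_{(d_1)}<0$. The symmetric argument applied at the bottom degree $d_0$ yields $\deg_\omega f_{(d_0)}<0$.

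To reach intermediate components I would induct on the number of nonzero homogeneous pieces of $f$. The inductive step asks, given $\deg_\omega f_{(d_1)}<0$, whether $\tilde f:=f-f_{(d_1)}$ satisfies $\tilde f^i\in\im\delta_a$ for all $i\ge 1$; if so, induction yields $\deg_\omega\tilde f<0$, and combined with $\deg_\omega f_{(d_1)}<0$ this gives $\deg_\omega f<0$. Expanding $\tilde f^i=\sum_{j=0}^i\binom{i}{j}(-f_{(d_1)})^j f^{i-j}$, the $j=0$ summand is in $\im\delta_a$ by hypothesis, and the $j=i$ summand $(-f_{(d_1)})^i$ satisfies $\deg_\omega<0$ and so lies in $\im\delta_a$ by Lemma~\ref{lem m>k delta}.

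The main obstacle is the cross terms $f_{(d_1)}^j f^{i-j}$ with $1\le j\le i-1$: since $\im\delta_a$ is not closed under multiplication, membership is not automatic. I would treat this by strong induction on $i$, using that $f_{(d_1)}^j\in M:=\{p:\deg_\omega p<0\}\subseteq\im\delta_a$ and tracking, monomial by monomial, both the $\omega$-degree and the residue of the standard degree modulo $m$: a product of a monomial in $f_{(d_1)}^j$ with a monomial in $f^{i-j}$ either still has negative $\omega$-degree (so lies in $M$), or has standard degree not divisible by $m$ (so lies in $\mathcal{C}$), or falls in the remaining delicate case whose $\mathcal{B}$-projection I expect to land in $D(\mathcal{B})$ by invoking the inductive hypothesis. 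Tightening this final case into a clean bookkeeping statement is the step I anticipate being the genuinely hard one.
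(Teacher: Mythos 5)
Your opening reduction is sound: since $\delta_a$ preserves the standard grading, the top standard-degree component of $f^i$ is $f_{(d_1)}^i$ and lies in $\im\delta_a$, and when $m\mid id_1$ it lies in $\mathcal{B}\cap\im\delta_a=\delta_a(\mathcal{B})=D(\mathcal{B})\subseteq\im D$, so Lemma \ref{lem f^i imD} does give $\deg_\omega f_{(d_1)}<0$ (and likewise for the bottom component). But the induction you then set up does not close, and you have correctly located where it fails: the cross terms $f_{(d_1)}^jf^{i-j}$ with $1\le j\le i-1$. Knowing $\deg_\omega f_{(d_1)}^j<0$ and $f^{i-j}\in\im\delta_a$ gives you nothing, because $\im\delta_a$ is a module only over $\ker\delta_a$, not over $\mathcal{R}$, and $f_{(d_1)}^j$ has no reason to lie in $\ker\delta_a$; nor can you bound $\deg_\omega\bigl(f_{(d_1)}^jf^{i-j}\bigr)$ without already knowing $\deg_\omega f<0$, which is the conclusion. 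The monomial-by-monomial bookkeeping you sketch for the residual case is not an argument, and there is no evident way to complete it. More fundamentally, peeling off standard-degree components is the wrong axis to induct along: the intermediate standard-degree components of $f^i$ are sums of cross terms rather than powers of components of $f$, so only the two extreme components are accessible this way, while $\deg_\omega f$ may well be realized by an intermediate one.

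The paper's proof sidesteps all of this by arguing by contradiction and arranging that only \emph{top} components are ever extracted. Assuming $l=\deg_\omega f\ge0$, it multiplies by $x_1^{lm}\in\ker\delta_a$ (using that $\im\delta_a$ is a $\ker\delta_a$-module and $\deg_\omega x_1=-1$) to normalize to $\deg_\omega f=0$; it then splits off the top \emph{$\omega$-homogeneous} component $f_0$, where the error $f^i-f_0^i$ has strictly negative $\omega$-degree and hence lies in $\im\delta_a$ automatically by Lemma \ref{lem m>k delta} --- exactly the absorption property your standard-degree cross terms lack; only then does it pass to the top standard-degree component and raise to the $m$th power to land in $\mathcal{B}$ and invoke Lemma \ref{lem f^i imD}. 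The two ingredients missing from your plan are the $\ker\delta_a$-module normalization via powers of $x_1$ and the decision to split off the top $\omega$-component before touching the standard grading.
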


\begin{proof}
Suppose the lemma were false. Then we could find an $f\in
\mathcal{R}$ which satisfies $\deg _{\omega }f\geq 0$ and $f^{i}\in
\im\delta _{a}$ for all $i\geq 1$. Since $a^{ m  }=1$, we have $\delta
_{a}(x_{1}^{i m  })=x_{1}^{i m  }-a^{i m  }x_{1}^{i m  }=0$
  for all  $i\geq 1$, whence
\begin{equation}
x_{1}^{i m  }\in \ker \delta _{a}\text{\quad for all\quad }i\geq 1.
\label{eq ker delta a}
\end{equation}%
Let $l=\deg _{\omega }f$. Since $\ker \delta _{a}$ is a subalgebra of $%
\mathcal{R}$ and $\im\delta _{a}$ is a submodule
of $\mathcal{R}$ over $\ker \delta _{a}$, we have
\begin{equation*}
(x_{1}^{l m  }f^{ m  })^{i}=x_{1}^{il m  }f^{i m  }\in \im%
\delta _{a}\text{\quad for all\quad }i\geq 1.
\end{equation*}%
Since $\deg _{\omega }(x_{1}^{l m  }f^{ m  })=-l m  +l m  =0$%
, replacing $f$ by $x_{1}^{l m  }f^{ m  }$ we can assume that $\deg
_{\omega }f=0$. Write
\begin{equation*}
f=f_{0}+f_{-1}+f_{-2}+\cdots ,
\end{equation*}%
where $f_{j}$ is the $\omega $-homogeneous component of degree $j$ of $f$.
Let $g_{i}=f^{i}-f_{0}^{i}$. Then $\deg _{\omega }g_{i}<0$. By Lemma \ref{lem m>k delta}, $g_{i}\in \im\delta _{a}$. Thus
\begin{equation*}
f_{0}^{i}=f^{i}-g_{i}\in \im\delta _{a}\text{\quad for all\quad }i\geq 1.
\end{equation*}%
Then replacing $f$ by $f_{0}$, we may assume that $f$ is $\omega $-homogeneous with $\deg _{\omega }f=0$.
Denote by $\bar{f}$ the highest homogeneous component of $f$ with respect to the
ordinary degree. Since $\delta _{a}(\mathcal{R}_{i})\subseteq \mathcal{R}_{i}
$ and $f^{i}\in \im\delta _{a}$ for any $i\geq 1$, we can get $\bar{f}%
^{i}\in \im\delta _{a}$. Replacing $f$ by $\bar{f}$, we may assume that $f$
is homogeneous with respect to the ordinary degree. Replacing $f$ by $%
f^{ m  }$ if necessary, we may also assume that $f^{i}\in \mathcal{B}$
for any $i\geq 1$. Since $\im\delta _{a}\cap \mathcal{B}=\delta _{a}(\mathcal{%
B})$, we can get that $f^{i}\in \delta _{a}(\mathcal{B})$. By Lemma \ref{lem
DB}, $f^{i}\in D(\mathcal{B})$ for any $i\geq 1$. By Lemma \ref{lem f^i imD}%
, $\deg _{\omega }f<0$, which contradicts $\deg _{\omega }f=0$.
\end{proof}

\begin{lem}
\label{lem delta last case} If $a\in K$ be a $m$th primitive root of unity,  then $\im\delta _{a}$ is an MZ-subspace of $%
\mathcal{R}$.
\end{lem}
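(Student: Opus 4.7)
The plan is to apply the definition of MZ-subspace directly, reducing everything to the two technical lemmas already proved for the weight $\omega=(-1,0,1)$. Concretely, given $f,g\in\mathcal{R}$ with $f^{i}\in\im\delta_{a}$ for every $i\geq 1$, I must show $gf^{i}\in\im\delta_{a}$ for all sufficiently large $i$.

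First I would apply Lemma \ref{lem f^i im delta} to the hypothesis on $f$ to conclude $\deg_{\omega}f<0$. The degenerate case $f=0$ or $g=0$ needs no comment, so assume both are nonzero. Because $\mathcal{R}$ is a domain, the $\omega$-leading forms of two nonzero polynomials have nonzero product, so $\deg_{\omega}$ is additive on products:
\[
\deg_{\omega}(gf^{i})=\deg_{\omega}g+i\deg_{\omega}f.
\]
Choosing $N$ with $N\,|\!\deg_{\omega}f|>\deg_{\omega}g$ guarantees that $\deg_{\omega}(gf^{i})<0$ for every $i\geq N$. Then each monomial $X^{\beta}$ appearing in $gf^{i}$ satisfies $\omega\cdot\beta\leq\deg_{\omega}(gf^{i})<0$, so Lemma \ref{lem m>k delta} puts each such $X^{\beta}$ into $\im\delta_{a}$, and summing yields $gf^{i}\in\im\delta_{a}$, as required.

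I do not expect any serious obstacle, since all the real content has already been packaged into Lemmas \ref{lem f^i im delta} and \ref{lem m>k delta}: the former supplies a necessary condition (negative $\omega$-degree) on any $f$ whose powers lie in $\im\delta_{a}$, and the latter converts that condition into membership of arbitrary products $gf^{i}$ in $\im\delta_{a}$. The only point to check carefully is additivity of $\deg_{\omega}$ on products, which is a standard consequence of $\mathcal{R}$ being an integral domain.
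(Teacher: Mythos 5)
Your proof is correct and follows essentially the same route as the paper: apply Lemma \ref{lem f^i im delta} to get $\deg_{\omega}f<0$, use additivity of $\deg_{\omega}$ to force $\deg_{\omega}(gf^{i})<0$ for large $i$, and conclude via Lemma \ref{lem m>k delta}. Your extra care about the zero cases and the monomial-by-monomial reduction merely makes explicit what the paper leaves implicit.
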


\begin{proof}
Suppose $f\in \mathcal{R}$ is such that $f^{i}\in \im\delta _{a}$
for any $i\geq 1$. Then $\deg _{\omega }f<0$ by Lemma \ref{lem f^i im delta}.
For any $g\in \mathcal{R}$, we have
\begin{equation*}
\deg _{\omega }gf^{i}=\deg _{\omega }g+i\deg _{\omega }f<0\quad \text{for
all sufficiently large integers }i.
\end{equation*}
It follows from Lemma \ref{lem m>k delta} that $gf^{i}\in \im\delta _{a}$
for all sufficiently large integers $i$. Thus $\im\delta _{a}$ is an
MZ-subspace of $\mathcal{R}$.
\end{proof}

\begin{thm}
\label{thm delta xyz} Let $\delta= \id -\phi$ be a linear $K$-$%
\mathcal{E}$-derivation of $ K[x_1,x_2,x_3] $. Then $\im \delta$ is an
MZ-subspace of $K[x_1,x_2,x_3] $.
\end{thm}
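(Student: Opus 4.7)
The plan mirrors the proof of Theorem \ref{thm D}. By Lemma \ref{lem1eta} I first reduce to the case where $K$ is algebraically closed, and by Lemma \ref{lem delta case123} I may assume $\phi$ is one of the three listed normal forms. Cases (1) and (2) are immediate from Lemma \ref{lem delta abcde} and Lemma \ref{lem delta abcdee1}, so all the content lies in case (3), where $\phi(x_1)=ax_1+x_2$, $\phi(x_2)=ax_2+x_3$, $\phi(x_3)=ax_3$. I would split this according to whether the scalar $a$ is a root of unity.

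First suppose $a$ is not a root of unity; this also covers $a=0$. The endomorphism $\phi$ preserves the standard gradation $\mathcal{R}=\bigoplus_{i\ge 0}\mathcal{R}_i$, and in the monomial basis of $\mathcal{R}_i$ ordered lexicographically with $x_1>x_2>x_3$, the matrix of $\phi|_{\mathcal{R}_i}$ is upper triangular with every diagonal entry equal to $a^i$ (the leading term of $\phi(X^\beta)$ is $a^{|\beta|}X^\beta$). Hence $\delta|_{\mathcal{R}_i}=\id-\phi|_{\mathcal{R}_i}$ has all eigenvalues equal to $1-a^i$, which is nonzero for every $i\ge 1$. Therefore $\delta$ restricts to an invertible linear map on each $\mathcal{R}_i$ with $i\ge 1$, so $\im\delta=\bigoplus_{i\ge 1}\mathcal{R}_i=(x_1,x_2,x_3)$ is an ideal of $\mathcal{R}$ and in particular an MZ-subspace.

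Now suppose $a$ is a primitive $m$-th root of unity for some $m\ge 1$; then $a\ne 0$ and the matrix of $\phi$ has a single Jordan block of size three with eigenvalue $a$. A short Jordan-form check shows that the endomorphism $\phi_a$ defined in \eqref{eqphia} has the same Jordan structure (its nilpotent part $\phi_a-aI$ has nilpotency index three when $a\ne 0$), so there is a linear $K$-automorphism $\sigma$ of $\mathcal{R}$ with $\sigma\phi\sigma^{-1}=\phi_a$; this gives $\sigma\delta\sigma^{-1}=\delta_a$ and $\sigma(\im\delta)=\im\delta_a$, and since the MZ-property of a subspace is preserved under such an algebra automorphism, it suffices to invoke Lemma \ref{lem delta last case}. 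This root-of-unity sub-case is the main obstacle: the diagonal invertibility argument of the previous paragraph breaks down exactly on the graded pieces where $a^i=1$, and the chain of Lemmas \ref{lem C}--\ref{lem f^i im delta} was built precisely to handle those pieces by comparing $\delta_a$ there with the exponential of the locally nilpotent derivation $D=x_1\partial_{x_2}-x_2\partial_{x_3}$.
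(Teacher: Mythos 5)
Your proposal is correct and follows essentially the same route as the paper: reduce to algebraically closed $K$, use the normal forms of Lemma \ref{lem delta case123}, dispose of cases (1)--(2) by Lemmas \ref{lem delta abcde} and \ref{lem delta abcdee1}, handle case (3) with $a$ not a root of unity by showing $\delta$ is invertible on each graded piece $\mathcal{R}_i$ (the paper phrases this via leading terms and Lemma \ref{lemeta}, you via triangularity of $\phi|_{\mathcal{R}_i}$ with diagonal $a^i$ --- the same fact), and handle the root-of-unity case by conjugating $\phi$ to $\phi_a$ and citing Lemma \ref{lem delta last case}. A minor bonus of your phrasing is that it covers $a=0$ uniformly, whereas the paper's leading-term computation is written only for $a\neq 0$.
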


\begin{proof}
By Lemma \ref{lem1eta}, we assume, without loss of generality, that $K$ is
an algebraically closed field. Then we only need to consider the three
cases in Lemma \ref{lem delta case123}. Lemma \ref{lem delta abcde}
and Lemma \ref{lem delta abcdee1} prove the first two cases. We now prove
the last case.

Suppose that $a$ is an $m$th primitive root of unity.  Consider the following matrices
\[J=
\begin{pmatrix}
a & 0 & 0 \\
1 & a & 0 \\
0 & 1 & a
\end{pmatrix},\quad \text{and}\quad
 A=
\begin{pmatrix}
a & a & -\frac{1}{2}a \\
0 & a & -a \\
0 & 0 & a%
\end{pmatrix} .
\] Then we see that
 \[
\phi (x_{1},x_{2},x_{3})=(x_{1},x_{2},x_{3})J, \quad \text{and}\quad
\phi_a(x_{1},x_{2},x_{3})=(x_{1},x_{2},x_{3})A.
 \]
Since $A$ is similar to $J$,
 $\delta $ is conjugate to $\delta _{a}$ by a linear $K$-automorphism of $K[x_{1},x_{2},x_{3}]$.  By Lemma \ref{lem delta last case}, $\im\delta$ is an MZ-subspace.

Suppose $a$ is not a root of unity. Take the lexicographical order on $K[X]$ with $x_1 > x_2> x_3$. When $a\ne0$,
\begin{align*}
 \LT (\phi (X^{\beta }))&=\LT \left( (ax_{1}+x_{2})^{\beta
_{1}}(ax_{2}+x_{3})^{\beta _{2}}(ax_{3})^{\beta _{3}}\right) \\
&= \left( \LT \left( ax_{1}+x_{2}\right) \right) ^{\beta _{1}}\left(
\LT (ax_{2}+x_{3})\right) ^{\beta _{2}}\left( \LT (ax_{3})\right) ^{\beta
_{3}}\\&=a^{|\beta |}X^{\beta }.
\end{align*} Thus
$\LT (\delta (X^{\beta }))=(1-a^{|\beta|})X^{\beta}$  and  $1-a^{|\beta|}\ne0$
   for all $\beta\in \mathbb{N}^n\setminus\{0\}$, since $a$ is not a root of unity. By Lemma \ref{lemeta}, $\im \delta$ is an MZ-subspace.
%
\end{proof}


\begin{thebibliography}{99}



\bibitem{Cox} D. Cox, J. Little, D. O'Shea, Ideals, varieties, and algorithms. An introduction to computational algebraic geometry and commutative algebra. 2nd ed. Undergraduate Texts in Mathematics. Springer-Verlag, New York, 1997.


\bibitem{EssSun} A. van den Essen, X. Sun, Monomial preserving derivations
and Mathieu-Zhao subspaces. J. Pure Appl. Algebra 222 (2018), no. 10, 3219–3223.

\bibitem{EWZ} A. van den Essen, D. Wright, W. Zhao, Images of locally finite
derivations of polynomial algebras in two variables. J. Pure Appl. Algebra
215 (2011), no. 9, 2130–2134.

\bibitem{EsZ} A. van den Essen, W. Zhao, On images of locally finite
derivations and $\mathcal{E}$-derivations. J. Pure Appl. Algebra 223 (2019),
no. 4, 1689-1698.

\bibitem{Fre} G. Freudenburg, Algebraic theory of locally nilpotent
derivations. Second edition. Encyclopaedia of Mathematical Sciences, 136.
Invariant Theory and Algebraic Transformation Groups, VII. Springer-Verlag,
Berlin, 2017.

\bibitem{LiuSun} D. Liu, X. Sun, The factorial conjecture and images of
locally nilpotent derivations. Bull. Aust. Math. Soc. 101 (2020), no. 1, 71–79.

\bibitem{Now} A. Nowicki, Polynomial derivations and their rings of
constants. Uniwersytet Miko{\l }aja Kopernika, Toru\'{n}, 1994.


\bibitem{Sun} X. Sun, Images of derivations of polynomial algebras with
divergence zero. J. Algebra 492 (2017), 414–418.

\bibitem{SunLiu} X. Sun, D. Liu, Images of locally nilpotent derivations of
polynomial algebras in three variables. J. Algebra 569 (2021), 401–415.

\bibitem{Zhao12} W. Zhao, Mathieu subspaces of associative algebras. J.
Algebra 350 (2012), 245–272.

\bibitem{Zhao 1-1} W. Zhao, Idempotents in intersection of the kernel and
the image of locally finite derivations and $\mathcal{E}$-derivations.
Eur. J. Math. 4 (2018), no. 4, 1491–1504.

\bibitem{Zhao17} W. Zhao, Some open problems on locally finite or locally
nilpotent derivations and $\mathcal{E}$-derivations. Commun. Contemp.
Math. 20 (2018), no. 4, 1750056, 25 pp.

\bibitem{Zhao x} W. Zhao, Images of ideals under derivations and $\mathcal{E}
$-derivations of univariate polynomial algebras over a field of
characteristic zero, arXiv:1701.06125.



\end{thebibliography}


\end{document}